\newtheorem*{mainthm}{Main Theorem}
\newtheorem*{maincor}{Main Corollary}
\newtheorem*{quest}{Question}
\newtheorem*{AGtheorem}{Quasiconformal Distortion of Angles Theorem}
\newtheorem*{Atheorem}{Entire Quasiconformal Maps are Quasisymmetric}
\newtheorem*{Ttheorem}{Teichm\"uller's Modulus Theorem}
\newcommand{\field}[1]{\mathbb{#1}}
\def\eps{\epsilon}
\def\fC{\field{C}}
\def\fD{\field{D}}
\def\fR{\field{R}}
\def\fZ{\field{Z}}
\def\cB{\mathcal{B}}
\def\cD{\mathcal{D}}
\def\cF{\mathcal{F}}
\def\cI{\mathcal{I}}
\def\cT{\mathcal{T}}
\def\cS{\mathcal{S}}
\def\qp{{P_\theta}}
\def\qt{{Q^t}}
\def\qth{{Q^{t(\theta)}}}
\def\rot{{R_\theta}}
\def\sd{{\Delta_\theta}}
\begin{document}
\title{On the scaling ratios for Siegel disks}
\author{Denis Gaidashev}
\institute{Department of Mathematics, Uppsala University, Uppsala, Sweden \\
{\tt gaidash@math.uu.se}
}
%

\setcounter{page}{1}

\maketitle


\begin{abstract}
The boundary of the Siegel disk of a quadratic polynomial  with an irrationally indifferent  fixed point and the rotation number whose continued fraction expansion is preperiodic has been observed to be self-similar with a certain scaling ratio. The restriction of the dynamics of the quadratic polynomial to the boundary of the Siegel disk is known to be quasisymmetrically conjugate to the rigid rotation with the same rotation number.  The geometry of this self-similarity is universal for a large class of holomorphic maps. A renormalization  explanation of this universality has been proposed in the literature. 

In this paper we provide an estimate on the quasisymmetric constant of the conjugacy, and use it to prove bounds on the scaling ratio $\lambda$ of the form 
$$\alpha^\gamma \le |\lambda| \le C \delta^s,$$
where $s$ is the period of the continued fraction,  and $\alpha \in (0,1)$ depends on the rotation number in an explicit way, while $C>1$, $\delta \in (0,1)$ and $\gamma \in (0,1)$ depend only on the maximum of the integers in the continued fraction expansion of the rotation number. 
\end{abstract}

\tableofcontents

\section{Introduction}
\label{intro}
\begin{definition}
Given $\theta \in (0,1]$, the quadratic polynomial $\qp$ is defined as 
$$\qp(z)=e^{2 i \pi \theta} z \left( 1-{z \over 2} \right).$$
\end{definition}

Here, the number  $\theta$ has a unique continued fraction expansion
$$\theta=[a_1, a_2, a_3, \ldots]= {1 \over a_1+{1 \over a_2 +{1 \over a_3 +\ldots } }}.$$
For $\theta \in (0,1]$, we denote, as usual, 
$${p_n \over q_n}=[a_1,a_2,a_3, \ldots, a_n]$$
the $n-th$ best rational approximant of $\theta$, obtained by truncating the continued fraction expansion for $\theta$.

In his classical work \cite{Si}, Siegel demonstrated that the polynomial $\qp$ is conformally conjugate to the linear rotation $\rot(z)=e^{2 i \pi \theta} z$ in a neighborhood of zero if $\theta$ is a Diophantine number. The maximal domain of this conjugacy is called the {\it Siegel disk}.  We will denote it as $\sd$. Bruno demonstrated in \cite{Bru1,Bru2} that an analytic germ with an irrationally neutral  multiplier $f'(0)=e^{2 i \pi \theta}$ is conformally linearizable in a neighborhood of zero if $\theta$ is a Bruno number, satisfying $\sum_{n=0}^\infty q_n^{-1} \ln q_{n+1}< \infty$.  Yoccoz proved in \cite{Yoc2} that the Bruno condition is also necessary for quadratic polynomials.

Numerical experiments (ex, \cite{MN} and \cite{Wid}) demonstrated that the boundaries of the Siegel disks of analytic germs $f$ with a multiplier $f'(0)=e^{2 i \pi \theta^*}$, $\theta^*={\sqrt{5}-1 \over 2}$, seemed to be non-differentiable Jordan curves which clearly exhibited a self-similar structure in the neighborhood of the {\it critical point} $c=1$. Furthermore, the limit $\lambda=\lim_{n \rightarrow \infty} (f^{q_{n+1}}(1)-f^{q_{n}}(1))/( f^{q_{n}}(1)-f^{q_{n-1}}(1))$ seemed to exist and to be {\it universal} - independent of the particular choice of $f$. This limit is called the {\it scaling ratio}.

The issue of nature of this curve was first addressed by Herman \cite{Her} and \'Swi{\c{a}}tek \cite{Sw2}. They proved that if $\theta$ is of {\it bounded type}, that is 
$$\sup{a_i} < \infty,$$ 
then $\partial \sd$ is a quasicircle (an image of the circle $|z|=1$ under a quisi-conformal map) containing the critical point $1$. Petersen \cite{Pet}  proved that the Julia set $J(\qp)$ is locally connected and has Lebesgue measure zero.

Some numerical observations of \cite{MN} and \cite{Wid} about the self-similarity of $\partial \sd$ have been proved analytically by McMullen in \cite{McM2}. In particular he proved that if $\theta$ has a preperiodic continued fraction expansion, that is a continued fraction expansion which is eventually periodic, then the small-scale dynamics of $\qp$ admits an asymptotic: certain high order iterates of $\qp$, appropriately rescaled, converge. Among other things, using renormalization for commuting holomorphic pairs, McMullen showed that if $\theta$ is a quadratic irrational and $s$ is the period of its continued fraction, then the limit 
\begin{equation}\label{eq:scaling_ratio}
\lambda=\left\{  
\lim_{n \rightarrow \infty} { \ f^{q_{n+s}}(1)-f^{q_{n}}(1) \ \over f^{q_{n}}(1)-f^{q_{n-s}}(1)}, \ s -{\rm even}, \atop
\lim_{n \rightarrow \infty} {  \ f^{q_{n+s}}(1)-f^{q_{n}}(1)  \ \over \overline{f^{q_{n}}(1)-f^{q_{n-s}}(1)} }, \ s -{\rm odd},
 \right. 
\end{equation}
indeed exists in the case $f \equiv \qp$, and the boundary of the Siegel disk is self-similar around the critical point.

\begin{figure}
\centering
\vspace{2mm}       
\begin{tabular}{c c} 
\includegraphics[height=5cm]{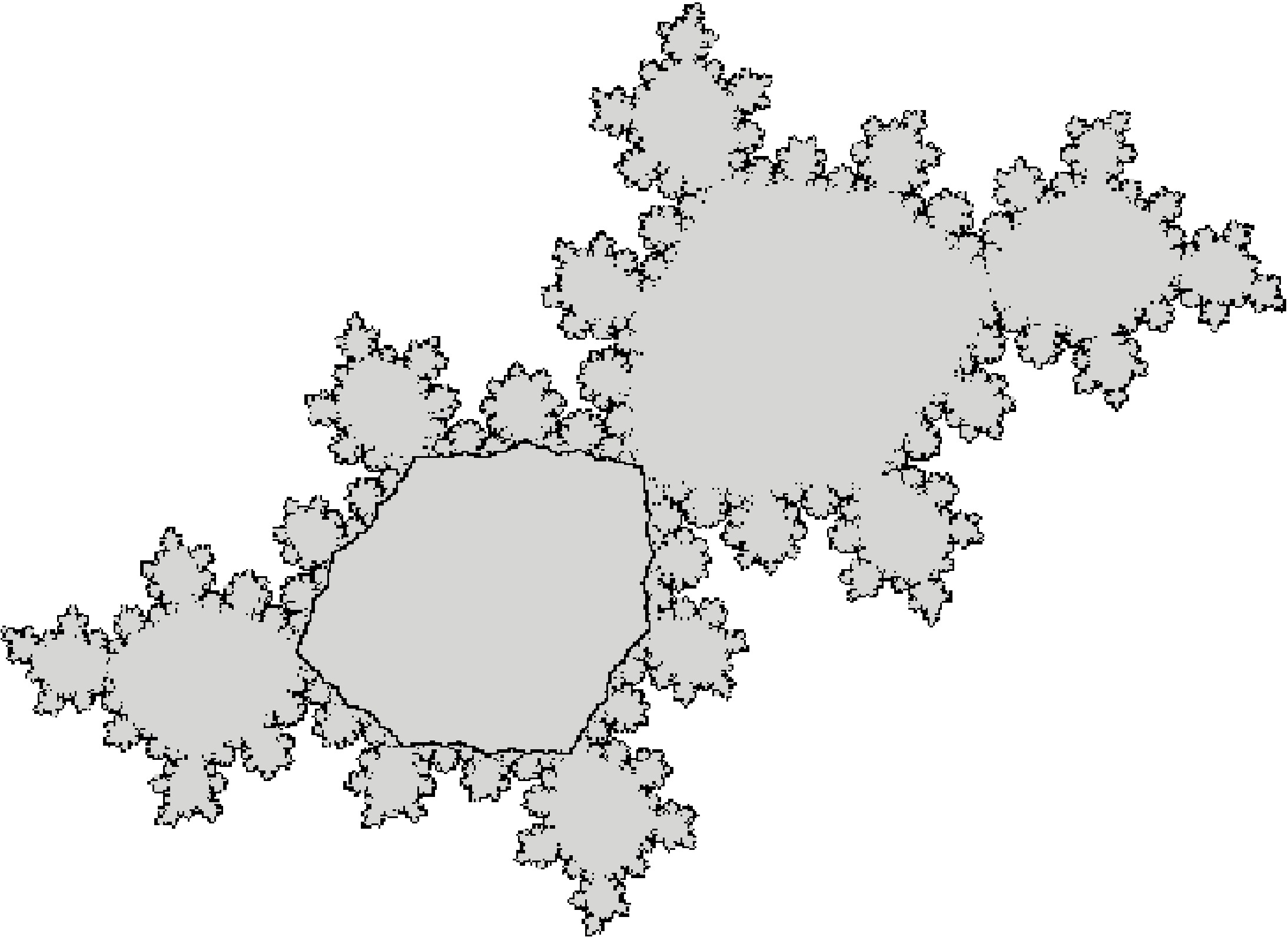} \quad & \quad \includegraphics[width=5cm,height=5cm]{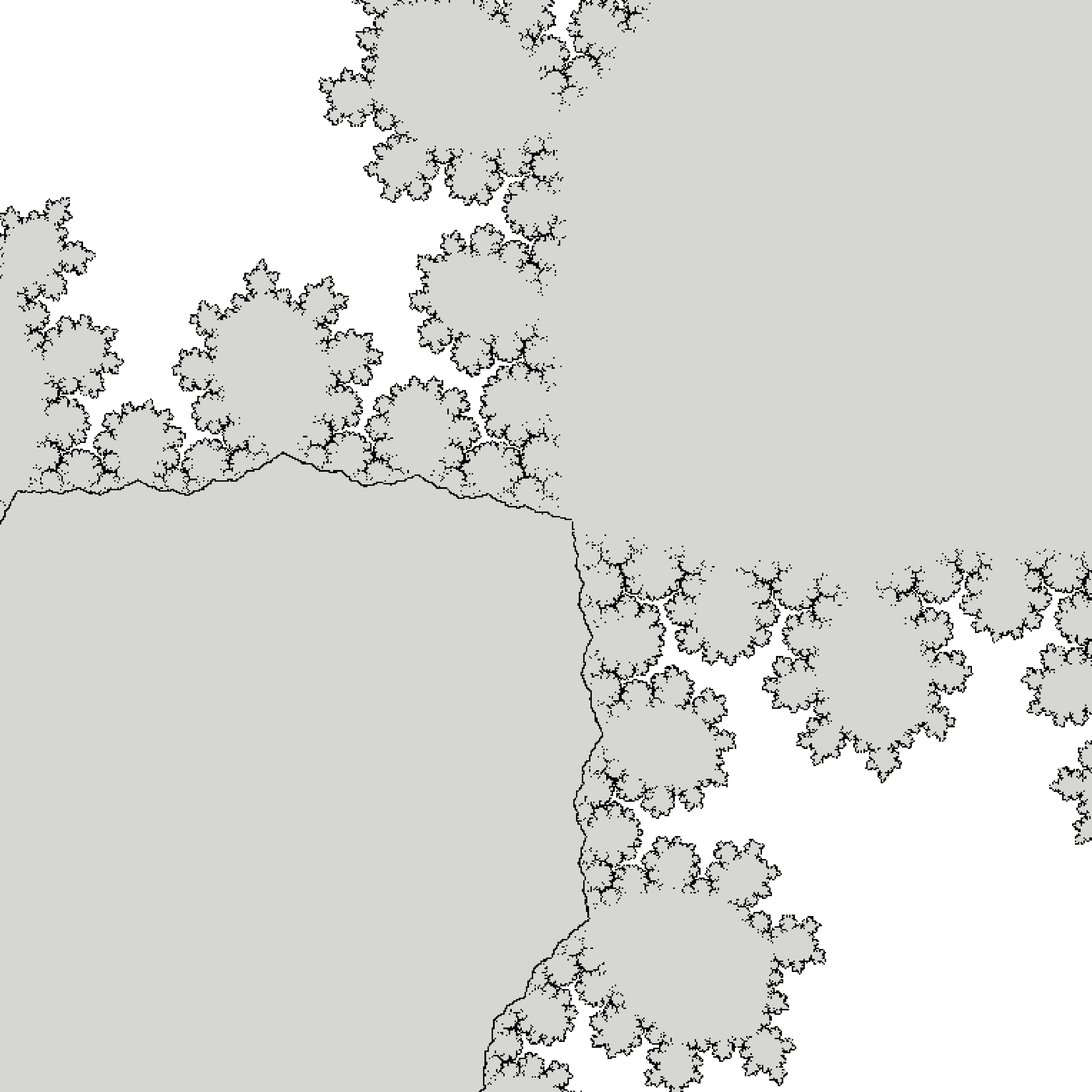}
\end{tabular}
\vspace{4mm}       
\caption{The filled Julia set of the golden mean quadratic polynomial $e^{2 i \pi \theta} z+ z^2$ (grey), together with a blow up around the critical point. The Siegel disk is bounded by the black curve.}
\label{fig:1}       
\end{figure}

Universal nature of the self-similarity can be explained if one could prove hyperbolicity of renormalization in an appropriate functional class. Hyperbolicity of renormalization for golden mean germs has been addressed in \cite{GaYa}, where the {\it cylinder renormalization} operator of \cite{Ya1} is used to demonstrate the existence of an unstable manifold in an appropriate Banach manifold of golden  mean germs; furthermore, the authors give an outline of a computer-assisted-prove of existence of a stable renormalization manifold, based on rigorous computer-assisted uniformization  techniques of \cite{Gai}. Hyperbolicty of renormalization is accessible analytically for golden-like rotation numbers $\theta=[N,N,N, \ldots]$, where $N$ is large; specifically, \cite{Ya2} uses the results of \cite{ISh} to prove the hyperbolicity conjecture for ``close-to-parabolic'' rotation numbers which contain a subsequence $a_{i(k)}$ of large integers in the continued fraction expansion.

\begin{figure}
\centering
{\includegraphics[width=6cm,height=6cm,angle=-90]{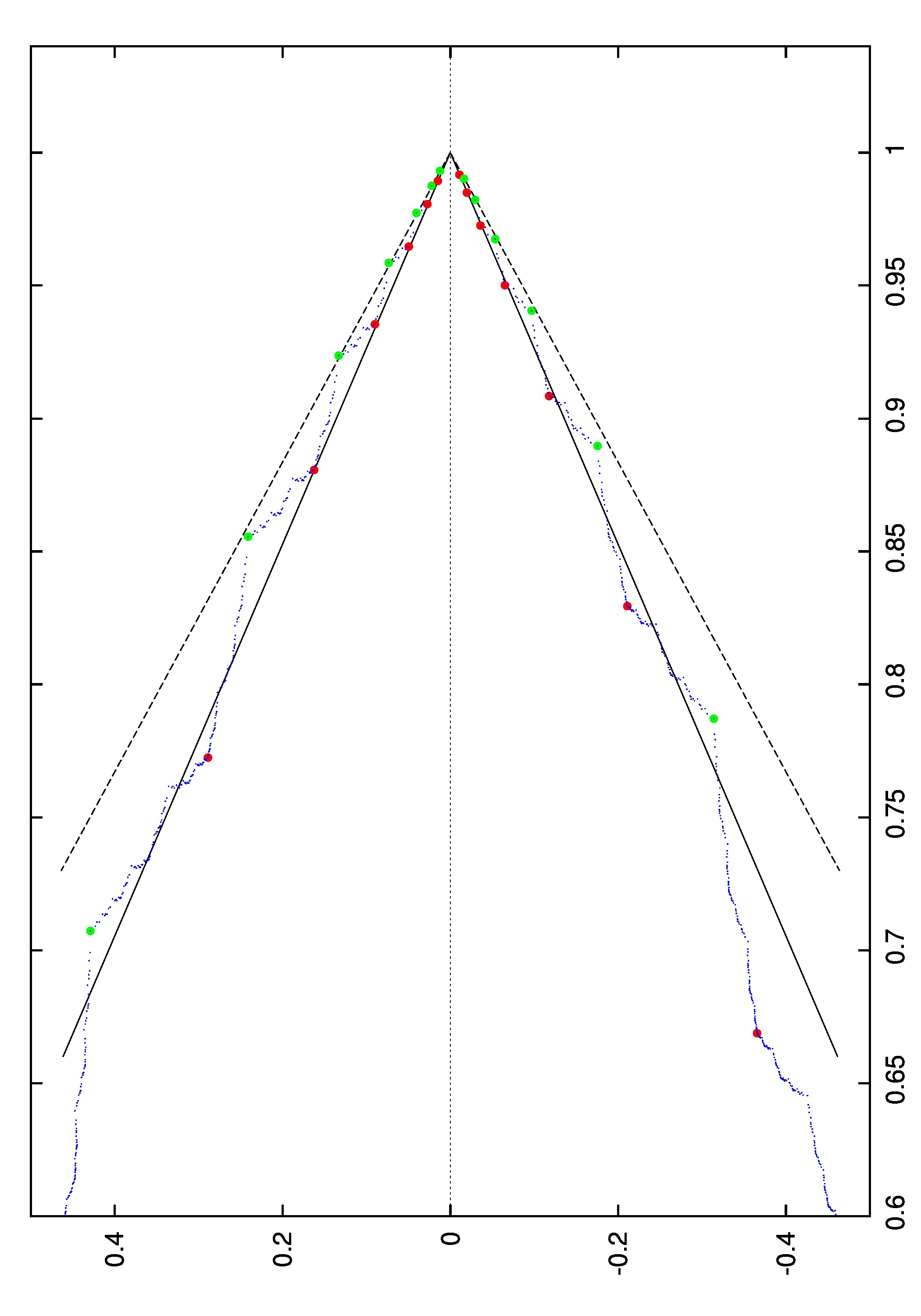}}
\vspace{3mm}       
\caption{The geometry of the forward (red) and backward (green) closest returns in the golden mean Siegel disk for the polynomial $P_\theta$. The two sets of lines are separated by $107.2\ldots^\circ$ and $119.6\ldots^\circ$ angles, respectively.}
\label{fig:2}       
\end{figure}

L. Carleson, based on the earlier numerical observations, has conjectured in \cite{Car} that the closest backward returns to the critical point of the golden mean Siegel disk converge on two lines separated by an angle equal to ${2 \over 3} \pi$, and suggested an approach to prove this.  The numerical results of \cite{MN} indicate, however, that close to the critical point, $\partial \sd$ is asymptotically contained between two sectors of angles $107.2\ldots^\circ$ and $119.6\ldots^\circ$ (see Fig. \ref{fig:2}), the second value being definitely less than ${2 \over 3} \pi$.

Several specific questions of self-similarity of Siegel disks have been addressed in \cite{BuHe}. We will now give brief summary of the results therein. 

Recall that a number $\theta \in (0,1]$ is a quadratic irrational iff its continued fraction is preperiodic: there exists an integer $N \ge 1$ such that $a_{i+s}=a_i$ for all $i \ge N$ and  for some integer $s \ge 1$. Given such $\theta$, denote
\begin{equation}\label{quad_ir}
\theta_i=[a_i,a_{i+1},a_{i+2}, \ldots], \quad  \alpha=\theta_{N+1} \theta_{N+2} \ldots \theta_{N+s}.
\end{equation}

\begin{theorem} (X. Buff, C. Henriksen)  \label{BHtheorem1}
Let $\theta$ be a quadratic irrational. Then the scaling ratio $\lambda  \in  \fD \setminus \{ 0\}$, 
$$\lambda=\left\{ 
\lim_{n \rightarrow \infty} { \  \qp^{q_{n+s}}(1)-\qp^{q_{n}}(1))  \ \over   \qp^{q_n}(1)-\qp^{q_{n-s}}(1)}, \ s - {\rm even}, \atop
\lim_{n \rightarrow \infty} { \ \qp^{q_{n+s}}(1)-\qp^{q_{n}}(1))  \  \over   \overline{\qp^{q_n}(1)-\qp^{q_{n-s}}(1)}}, \ s - {\rm odd},
 \right.
$$
of the Siegel disk $\sd$ about the critical point satisfies
$$\alpha < |\lambda| <1,$$ 
where $\alpha$ is as in $(\ref{quad_ir})$.
\end{theorem}

The second result of \cite{BuHe} goes in the direction of Carleson's conjecture.

\begin{theorem} (X. Buff, C. Henriksen)  \label{BHtheorem2}
If $-\pi/\ln(\alpha^2)>1/2$ then $\sd$ contains a triangle with a vertex at the critical point $c$. In particular, $\Delta_{\theta^*}$, where $\theta^*=(\sqrt{5}-1)/2$ is the golden mean, contains a triangle with a vertex at $c$.
\end{theorem}

The hypothesis of Theorem  $\ref{BHtheorem2}$ does not hold, for example, for $\theta=[N,N,N, \ldots]$ with $N \ge 24$. For such rotation numbers the method of \cite{BuHe} does not allow to prove existence of an inscribed  triangle with a vertex at the critical point.

X. Buff and C. Henriksen also put forward several questions, one of them being:


\medskip

\begin{quest}
 Are there constants $\delta_1<\delta_2<1$, such that $\delta_1^s \le |\lambda| \le  \delta_2^s$ where $s$ is the period of the quadratic irrational $\theta$?
\end{quest}

\medskip

In this paper we answer this question partially: we find  constants $\delta_1$ and $\delta_2$ which depend only on the maximum of integers in the continued fraction expansion of $\theta$. Specifically, we prove the following result.

\begin{mainthm}
Suppose that $\theta=[a_1,a_2,\ldots]$ is a quadratic irrational whose continued fraction is preperiodic with period $s$, and let $\{p_n / q_n\}$ be the sequence of the best rational approximants of $\theta$. Then there exists $C(n)>1$, with $\lim_{n \mapsto \infty} C(n)=1,$ and constants $A<1$, $\beta>1$ and $0<\gamma < 1$, dependent only on $\max\{a_i\}$, such that the following holds.

\medskip

\noindent 1) If $s$ is odd, then
$$\alpha^{\gamma}  \le {|\qp^{q_{n+s}}(1)-1| \over |\qp^{q_{n}}(1)-1|} \le {C(n) \over \sqrt{A \beta^{-\log_2{\alpha^2 \over 1-\alpha^2}+1}+1}}, \quad {\rm if} \quad  \alpha  > {1 \over \sqrt{2}},$$ 
and
$$\alpha^{\gamma} \le {|\qp^{q_{n+s}}(1)-1| \over |\qp^{q_{n}}(1)-1|}  \le C(n) \beta^{-{1 \over 2} [s \log_2 \vartheta]}, \quad {\rm if}  \quad  \alpha  \le  {1 \over \sqrt{2}},$$
where $[ \cdot ]$ denotes the integer part, while
$$\vartheta=\alpha^{-{1 \over s}}.$$

\medskip
\noindent 2) If $s$ is even, then
$$\alpha^{\gamma}  \le {|\qp^{q_{n+s}}(1)-1| \over |\qp^{q_{n}}(1)-1|} \le {C(n) \over A \beta^{-\log_2{\alpha \over 1-\alpha}+1}+1}, \quad {\rm if} \quad  \alpha  > {1 \over 2},$$ 
and
$$\alpha^{\gamma} \le {|\qp^{q_{n+s}}(1)-1| \over |\qp^{q_{n}}(1)-1|}  \le C(n) \beta^{-[s \log_2 \vartheta]}, \quad {\rm if} \quad  \alpha  \le {1 \over 2}.$$
\end{mainthm}

\begin{remark}
Notice, that the bounds of the Main Theorem are given for the quantity $|\qp^{q_{n+s}}(1)-1|/|\qp^{q_{n}}(1)-1|$ which is not exactly that in our definition $(\ref{eq:scaling_ratio})$ of the scaling ratio $\lambda$. However, by the result of McMullen from \cite{McM2}, the limit $\hat{\lambda}=\lim_{n \rightarrow \infty} (\qp^{q_{n+s}}(1)-1)/(\qp^{q_{n}}(1)-1)$ exists, and is different from $1$; then, a straightforward computation show that 
$${\qp^{q_{n+s}}(1)-\qp^{q_{n}} \over \qp^{q_{n}}(1)- \qp^{q_{n-s}}}-{\qp^{q_{n+s}}(1)-1 \over \qp^{q_{n}}(1)-1 }={
{\qp^{q_{n-s}}(1)-1 \over \qp^{q_{n}}(1)-1 } {\qp^{q_{n+s}}(1)-1 \over \qp^{q_{n}}(1)-1 } -1 \over 1-{\qp^{q_{n-s}}(1)-1 \over \qp^{q_{n}}(1)-1 }} \quad {\rightarrow \atop n \rightarrow \infty} \quad   {\hat{\lambda}^{-1} \hat{\lambda}-1 \over 1-\hat{\lambda}^{-1}}=0.$$
Thus, $\hat{\lambda}=\lambda$.
\end{remark}

\begin{maincor}
Suppose that $\theta=[a_1,a_2,\ldots]$ is a quadratic irrational whose continued fraction is preperiodic with period $s$, and let $\{p_n / q_n\}$ be the sequence of the best rational approximants of $\theta$. Then there exists constants $\delta_2<1$ and $C_2>1$ which depend on  $\max\{a_i\}$ only, such that,
$$\lim_{n \rightarrow \infty} {|\qp^{q_{n+s}}(1)-1| \over |\qp^{q_{n}}(1)-1|} \le  C_2 \delta_2^s.$$ 
\end{maincor}

\begin{proof}
Notice, that 
$$\vartheta=\alpha^{-{1 \over s}}=\left(\theta_{N+1} \theta_{N+2} \ldots \theta_{N+s} \right)^{-{1 \over s}} \ge \max_{1 \le  i \le s} \{\theta_{N+i}   \}^{-1} \ge  {1 \over [1,B,1,B,1,B, \ldots] } \equiv {\vartheta}_B,$$
where $B=\max_{ i \ge 1 } a_i.$
Now, the scaling ratios can be bounded from above by $s$-th powers of constants depending only on $B$. To demonstrate that we consider several cases separately.

\medskip

\noindent {\it $1)$ Case of odd $s$, $\alpha > 1/\sqrt{2}$.}
\begin{eqnarray}
\nonumber {|\qp^{q_{n+s}}(1)-1| \over |\qp^{q_{n}}(1)-1|} &\le& {C(n) \over \sqrt{ A \beta^{-\log_2{\alpha^2 \over 1-\alpha^2}+1}+1}} \le {C(n) \over A^{1 \over 2} \beta^{1 \over 2} } \left( {1 \over \beta} \right)^{{1 \over 2} \log_2 (\vartheta^{2 s}-1)} \\
\nonumber & \le& {C(n) \over A^{1 \over 2}  \beta^{1 \over 2}} \left( {1 \over \beta} \right)^{s {1 \over 2} \log_2 (\vartheta_B^{2 s}-1)^{1 \over s}} \le {C(n) \over A^{1\over 2}  \beta^{1 \over 2} }\left( {1 \over \beta^{{1 \over 2} \log_2 (\vartheta_B^2-1)} } \right)^s \equiv {C(n) \over A^{1 \over 2}  \beta^{1 \over 2}} \delta_2^s, \quad  \delta_2 \equiv {1 \over \beta^{{1 \over 2} \log_2 (\vartheta_B^2-1)} },
\end{eqnarray}
where we have used that for all $\vartheta_B$ (which is always larger than $1$) the function  $(\vartheta_B^{2 s}-1)^{1 \over s}$ is an increasing function of $s$. Therefore, 
$$ \lim_{n \rightarrow \infty} {|\qp^{q_{n+s}}(1)-1| \over |\qp^{q_{n}}(1)-1|} \le {1 \over A^{1 \over 2}  \beta^{1 \over 2}} \delta_2^s \equiv C_2 \delta_2^s.$$

\medskip

\noindent {\it $2)$ Case of even $s$, $\alpha > 1/2$.} 
\begin{eqnarray}
\nonumber {|\qp^{q_{n+s}}(1)-1| \over |\qp^{q_{n}}(1)-1|} &\le& {C(n) \over A \beta^{-\log_2{\alpha \over 1-\alpha}+1}+1} \le {C(n) \over A \beta} \left( {1 \over \beta} \right)^{\log_2 (\vartheta^s-1)} \\
\nonumber & \le& {C(n) \over A \beta} \left( {1 \over \beta} \right)^{s \log_2 (\vartheta_B^s-1)^{1 \over s}} \le {C(n) \over A \beta} \left( {1 \over \beta^{\log_2 (\vartheta_B-1)} } \right)^s \equiv {C(n) \over A \beta} \delta_2^s, \quad  \delta_2 \equiv  {1 \over \beta^{\log_2 (\vartheta_B-1)} }.
\end{eqnarray}

\medskip

\noindent {\it $3)$ Case $\alpha \le 1/\sqrt{2}$, $s$ odd.}
$$
{|\qp^{q_{n+s}}(1)-1| \over |\qp^{q_{n}}(1)-1|}  \le C(n) {1  \over \beta^{{1 \over 2} [ s \log_2 \vartheta_B ]}} \le C(n) {1  \over \beta^{{1 \over 2} s [\log_2 \vartheta_B ]}}  \equiv C(n) \delta_2^s, \quad \delta_2 \equiv  {1  \over \beta^{{1 \over 2} [\log_2 \vartheta_B ]}}.$$

\medskip

\noindent {\it $4)$ Case $\alpha \le 1/2$, $s$ even.}
$${|\qp^{q_{n+s}}(1)-1| \over |\qp^{q_{n}}(1)-1|}  \le C(n) {1  \over \beta^{ [ s \log_2 \vartheta_B ]}} \le C(n) {1  \over \beta^{ s [\log_2 \vartheta_B ]}}  \equiv C(n) \delta_2^s, \quad \delta_2 \equiv  {1  \over \beta^{[\log_2 \vartheta_B ]}}.$$

Thus, in all cases,  the upper bound depends solely on the period and the maximum of the integers in the continued fraction expansion. 
\end{proof}
\qed

In Section \ref{statements} we will give bounds on all constants in the above theorem. First, however, we will give a brief outline of the theory involved in the proofs, and quote several results from the literature that we will require.

\section{Preliminaries}

We will now give an introduction to four particular themes which will play an important role in our proofs: self-similarity of Siegel disks, quasiconformal conjugacy of the dynamics of the quadratic polynomial to that of a specific map called the modified Blaschke product, the distortion of angles and eccentricities under quasiconformal maps, and moduli of quadrilaterals.

\subsection{Self-similarity of the Siegel disk}
\label{slefsim}

Below, the rotation number $\theta$ will always be a quadratic irrational (recall $(\ref{quad_ir})$). McMullen demonstrates in \cite{McM2} that for $n \ge N$
$$\{ q_{n+s} \theta \}=(-1)^s \alpha \{ q_n \theta \}.$$

In a neighborhood of $1$, the map 
$$z \mapsto \left\{z^\alpha, \quad s \quad {\rm is \quad even},  \atop \bar{z}^\alpha, \quad s \quad {\rm is \quad odd},   \right.$$
conjugates $R_\theta^{q_n}$ to $R_\theta^{q_{n+s}}$ for all $n \ge N$. Furthermore, McMullen introduces in \cite{McM2} the mapping
$$\psi(z)=\left\{\phi^{-1}(\phi(z)^\alpha), \quad s \quad {\rm is \quad even},  \atop \phi^{-1}(\overline{\phi(z)}^\alpha), \quad s \quad {\rm is \quad odd},   \right.$$
where $\phi$ is the conformal isomorphism of the unit disk with $\sd$, normalized such that $\phi(1)=1$, and proves  that there exists a neighborhood $U$ of $1$, and a constant $\epsilon$ such that  $\psi$ is well defined on $U \cap \overline{\sd}$, conjugates $\qp^{q_n}$ to $\qp^{q_{n+s}}$, and is $C^{1+\epsilon}$-conformal or anticonformal:
$$\psi(z)=\left\{1+\lambda (z-1) + O(|z-1|^{1+\epsilon}), \quad s \quad {\rm is \quad even},  \atop 1+\lambda \overline{(z-1)} + O(|z-1|^{1+\epsilon}), \quad s \quad {\rm is \quad odd},   \right.$$
here, $\lambda=\psi'(1)$ is the scaling ratio.

\begin{figure}[t]
\centering
\begin{overpic}[scale=1,unit=1mm]{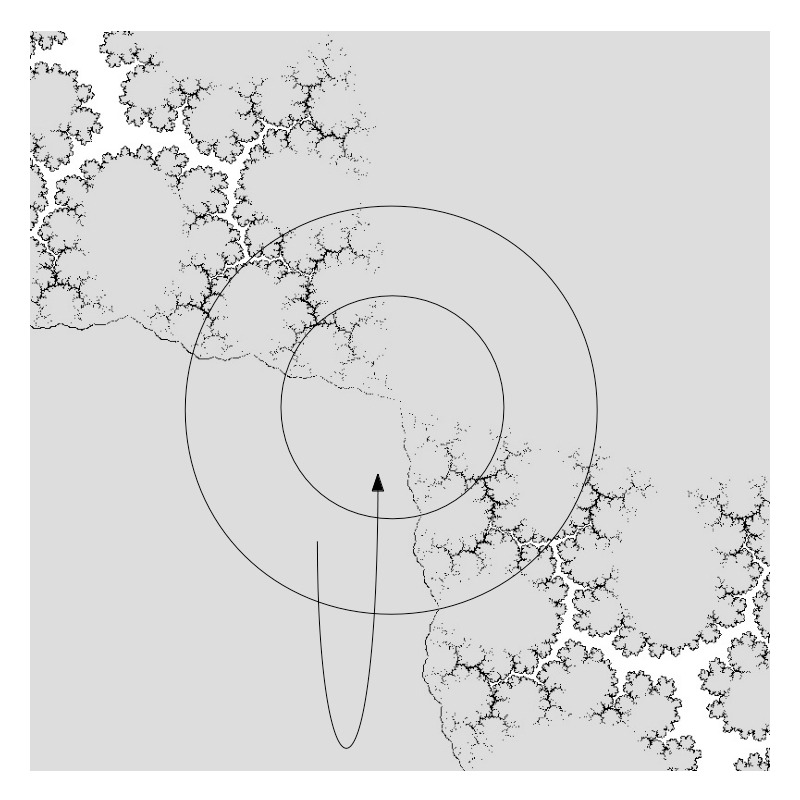}
\put(18,10){$\psi \circ \psi$}
\end{overpic}
\caption{Self-similarity of the Siegel disk. The filled Julia set in the interior of the smaller disk is a $C^{1+\eps}$-conformal copy of the filled Julia set in the interior of the larger disk.}
\label{fig:3} 
\end{figure}

The linearization of $\psi$ at $1$ will be called $\Lambda$:
$$\Lambda(z)=\left\{1+\lambda (z-1), \quad s \quad {\rm is \quad even},  \atop 1+\lambda \overline{(z-1)}, \quad s \quad {\rm is \quad odd}.   \right.$$

Fig. $\ref{fig:3}$ illustrates the self-similarity of a Siegel disk: the second iterate of $\psi$, which is $C^{1+\eps}$-conformal, maps the neighborhood inside the larger disk to that inside of the smaller. Given an open disk $D_\delta(1)$ of radius $\delta$ around $1$, the filed Julia set in its interior, $K_\qp \cap D_\delta(1)$, becomes an affine copy of $K_\qp \cap D_{|\lambda|^{-2} \delta}(1)$ under the map $\Lambda^2$ as $\delta \rightarrow 0$.

Since $\qp$ is a quadratic polynomial, $\sd$ has one preimage $\sd'$, different from and symmetric to $\sd$ with respect to $1$. McMullen proves in \cite{McM2} that the blow-ups $\Lambda^{-n}(\sd)$ and  $\Lambda^{-n}(\sd')$ converge in the Hausdorff topology on compact subsets of the sphere, to $\Lambda$-invariant quasidisks $\cD$ and $\cD'$, respectively. The boundaries of both of these quasidisks pass through $1$ and $\infty$.

Consider the cylinders $C=\cD / \Lambda^2$ and $C'=\cD' / \Lambda^2$ (we consider $\Lambda^2$ instead of $\Lambda$, because if $s$ is odd, then $\Lambda$ is orientation reversing). These two cylinders are conformally equivalent. Buff and Henriksen prove the following Lemma in \cite{BuHe} about the modulus of these cylinders, which plays an important role in their proof of Theorem $\ref{BHtheorem1}$. 

\begin{lemma} (X. Buff, C. Henriksen) \label{BHlemma}
The modulus of the cylinder  $C=\cD / \Lambda^2$ is equal to $-{ \pi / \ln{\alpha^2}}.$
\end{lemma}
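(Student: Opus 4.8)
The plan is to recognize the pair $(\cD,\Lambda^2)$, up to conformal isomorphism, as a completely explicit \emph{model} conformal dynamical system, and then to compute the modulus of the quotient of that model. Observe first that $\Lambda^2$ is an orientation‑preserving affine automorphism of $\hat{\mathbb{C}}$: for even $s$ it is $z\mapsto 1+\lambda^2(z-1)$, and for odd $s$ it is $z\mapsto 1+|\lambda|^2(z-1)$. Since $\cD=\lim_n\Lambda^{-n}(\sd)$ is $\Lambda$‑invariant, $\Lambda^2(\cD)=\cD$, so $\Lambda^2$ restricts to a conformal automorphism of the hyperbolic domain $\cD$ fixing the two boundary points $1$ and $\infty$; hence it is a hyperbolic isometry of $(\cD,\rho_{\cD})$ with axis the geodesic joining $1$ to $\infty$, and $\cD/\Lambda^2$ is an annulus. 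By the standard relation $\mathrm{mod}(\cD/\Lambda^2)=\pi/\ell$, where $\ell$ is the translation length of $\Lambda^2$ on $\cD$ (equivalently the length of the core geodesic of $\cD/\Lambda^2$), it suffices to identify the pair $(\cD,\Lambda^2)$ conformally and read off $\ell$.

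Next I would reduce to an explicit model through the Riemann map $\phi\colon\mathbb{D}\to\sd$, $\phi(1)=1$. McMullen's construction realizes $(\cD,\Lambda)$ as the blow‑up at the fixed point $1$ of the germ of $(\sd,\psi)$, and this blow‑up depends only on the conformal‑conjugacy class of that germ. By the identity $\phi\circ\psi=(\,\cdot\,)^{\alpha}\circ\phi$ for even $s$, respectively $\phi\circ\psi=\overline{(\,\cdot\,)}^{\alpha}\circ\phi$ for odd $s$, and because $\partial\sd$ is a quasicircle (so $\phi$ extends to a homeomorphism of the closures and $\phi^{-1}$ conjugates the interior dynamics), the germ of $(\sd,\psi)$ at $1$ is conformally isomorphic to the germ at $1$ of $(\mathbb{D},g)$, where $g(z)=z^{\alpha}$ for even $s$ and $g(z)=\bar z^{\,\alpha}$ for odd $s$; in both cases $g^{2}(z)=z^{\alpha^{2}}$. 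Consequently $(\cD,\Lambda^{2})$ is conformally isomorphic to the blow‑up at $1$ of $(\mathbb{D},g^{2})$.

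The model blow‑up is then computed by hand. In the conformal chart $\zeta=\log z$, which near $z=1$ maps $\mathbb{D}$ onto the half‑plane $\{\mathrm{Re}\,\zeta<0\}$ and $\partial\mathbb{D}$ onto $i\mathbb{R}$, the map $g^{2}$ becomes the dilation $\zeta\mapsto\alpha^{2}\zeta$ \emph{exactly}; since $\{\mathrm{Re}\,\zeta<0\}$ is itself invariant under this dilation, it is its own blow‑up at $0$, and therefore $(\cD,\Lambda^{2})$ is conformally isomorphic to $\bigl(\{\mathrm{Re}\,\zeta<0\},\ \zeta\mapsto\alpha^{2}\zeta\bigr)$. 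Finally, writing $\zeta=-t$ (so $t$ runs over the right half‑plane and the dilation becomes $t\mapsto\alpha^{2}t$) and then $w=\log t$ maps the right half‑plane onto the strip $\{|\mathrm{Im}\,w|<\pi/2\}$ and conjugates $t\mapsto\alpha^{2}t$ to the translation $w\mapsto w+\ln(\alpha^{2})$. Hence $\cD/\Lambda^{2}$ is conformally a flat cylinder of height $\pi$ and circumference $-\ln(\alpha^{2})>0$, whose modulus is $-\pi/\ln(\alpha^{2})$; equivalently, $\Lambda^{2}$ acts on $\cD$ with hyperbolic translation length $\ell=-\ln(\alpha^{2})$, and $\mathrm{mod}(\cD/\Lambda^2)=\pi/\ell$.

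The main obstacle is the identification carried out in the second paragraph. One must verify carefully that McMullen's quasidisk $\cD$, originally defined as a Hausdorff limit of the rescaled domains $\Lambda^{-n}(\sd)$ built from the $C^{1+\eps}$‑linearization of $\psi$ at $1$, genuinely coincides with the conformal blow‑up of the germ $(\sd,\psi)$ at $1$, and that this blow‑up is transported to the explicitly computable blow‑up of $(\mathbb{D},g)$ by $\phi$ despite the fact that $\phi$ fails to be conformal — indeed even $C^{1}$ — at the boundary point $1$. The apparent conflict between the boundary multipliers ($\lambda$ for $\Lambda$ versus $\alpha$ for $g$) is exactly the trace of this non‑smoothness and is harmless, since the multiplier at a boundary fixed point is not a conformal invariant; what is actually needed is only that the interior conformal structures of the two germs agree, together with the uniform quasicircle bounds for $\partial\sd$ to control the boundaries of the rescaled domains and guarantee convergence. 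Once this identification is secured, the remaining steps are routine.
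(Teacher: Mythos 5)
The paper quotes this lemma from \cite{BuHe} and uses it as a black box; it contains no proof to compare yours against. Judged on its own merits, your proposal is a sound reconstruction of the standard argument. The model computation in the logarithmic chart is correct, as is the identity $\mathrm{mod}(\cD/\Lambda^2)=\pi/\ell$ with $\ell$ the translation length of $\Lambda^2$ acting as a hyperbolic automorphism of $\cD$, and the final numerics do give $-\pi/\ln\alpha^2$.

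You correctly isolate the crux: the conformal identification of $\cD/\Lambda^2$ with the quotient of the model germ $(\field{D},g^2)$, which must hold despite the fact that $\phi$ is not differentiable at $1$ and that the Euclidean boundary multipliers $\lambda$ and $\alpha$ differ. As written, though, this step is only a sketch; ``the interior conformal structures of the two germs agree'' is the thing one has to prove here, not a reason. The clean way to close it, consistent with what you gesture at, is to introduce the Koenigs-type linearizer $\Phi=\lim_{n\to\infty}\Lambda^{-2n}\circ\psi^{2n}$. By McMullen's $C^{1+\eps}$-conformality of $\psi$ at $1$, this limit converges on a neighborhood of $1$ in $\overline{\sd}$ to a homeomorphism, conformal on the interior, conjugating $\psi^2$ to $\Lambda^2$ and carrying $\sd$ near $1$ onto $\cD$ near $1$. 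That $\Phi$, composed with $\phi$, gives a conformal isomorphism between the quotient annulus of $(\field{D},g^2)$ near $1$ and $\cD/\Lambda^2$, after which your log-chart computation finishes the proof. So: right approach and right answer, but the second paragraph needs this explicit linearizer (or an equivalent device from \cite{McM2}) to be a complete argument rather than a plausibility claim.
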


We will also use this Lemma in our proof of the lower bound on the scaling ratio in Section $\ref{lower_bound}$.

\subsection{The Blaschke model of the Siegel disk}
\label{blaschke}

We will now present the construction of a Blaschke product model for a quadratic polynomial of Douady, Ghys, Herman and Shishikura. Our description will generally follow that of \cite{YaZa}. The reader is referred to this work for a more detailed description of the Blaschke model for the filled Julia set of a quadratic polynomial.

 Define 
$$\qt(z)=e^{2 \pi i t } z^2 \left( {z-3 \over 1-3 z} \right).$$

The restriction $\qt \arrowvert_{\field{T}}$, $\field{T}$ being the unit circle $\{z \in \field{C}: |z|=1\}$, is a critical circle map with the cubic critical point at $z=1$, and the critical value $e^{2 \pi t}$. By monotonicity, for each irrational $0 < \theta < 1$ there exists $t(\theta)$, such that the rotation number $\rho\left(Q^{t(\theta)} \arrowvert_{\field{T}} \right)=\theta$. Recall, that by the result of Yoccoz \cite{Yoc1}, any critical circle map with an irrational rotation number is topologically conjugate to the rotation $R_\theta$.

$\qth$ has a superattracting fixed points at $0$ and $\infty$ and a double critical point at $1$. $\qth$ acts as a double branched covering of the immediate basin of attraction of $\infty$, $\cB(\infty)$. Since $\qth$ commutes with the reflection $T(z)=\bar{z}^{-1}$, it also acts as a degree $2$ covering on the immediate basin of attraction of the origin, $\cB(0)$. 

As usual, let $\field{T}$ denote the unite circle $\{z \in \fC: |z|=1 \}$. Fix an irrational rotation number $0 < \theta < 1$ of bounded type. By the theorem of Herman and \'Swi\c{a}tek, the unique homeomorphism $h: \field{T} \mapsto \field{T}$ with $h(1)=1$ which conjugates $\qth \arrowvert_{\field{T}}$ to $R_\theta$ is {\it quasisymmetric}. Recall that, given an increasing homeomorphism  $\eta: [0,\infty) \rightarrow [0,\infty)$, a mapping $h: A \subseteq \field{C}  \mapsto \field{C}$ is {\it $\eta$-quasisymmetric} if for each triple $z_0$, $z_1$, $z_2$ $\in A$
$${|h(z_0)-h(z_1)| \over |h(z_0)-h(z_2)|}  \le \eta \left( {|z_0-z_1| \over |z_0-z_2|} \right).$$  
A homeomorphism $h: \field{R} \mapsto \field{R}$ is $K$-quasisymmetric iff for every real $x$ and $\delta>0$
$$\left|{h(x+\delta)-h(x)   \over h(x) -h(x-\delta)     }\right| \le K.$$

 Let $H$ be some homeomorphic  extension of $h$ to the unit disk. One can, for example, choose the Douady-Earle extension of circle homeomorphisms (cf \cite{DE}), or Alhfors-Beurling extension (cf \cite{Leh}). In particular, the latter is quasiconformal; its constant of quasiconformality $M$ can be estimated in terms of the quasisymmetric constant $K$ of $h$ (cf \cite{Leh}) as
\begin{equation} \label{Kbound}
M \le 2 K-1.
\end{equation}

Define the {\it modified Blaschke product}
\begin{equation}\label{eq:modBP}
\bar{Q}_\theta(z)= \left\{  \qth(z),  \hspace{14.5mm} |z| \ge 1, \atop  H^{-1}(R_\theta(H(z))), \quad  |z| \le 1, \right.
\end{equation}
the two definitions matching along the boundary of $\field{D}$. $\bar{Q}_\theta$ is a degree $2$ branched covering of the sphere, holomorphic outside of $\field{D}$, and is quasiconformally conjugate on $\field{D}$ to a rigid rotation on the unit disk. We further conjugate $\bar{Q}_\theta(z)$ by a M\"obius transformation $m$
\begin{equation}\label{mbt}
m(z)={(1-\bar{a})(z-a) \over (1-a) (1-\bar{a}z)}, \quad a=H^{-1}(0),
\end{equation}
to place $H^{-1}(0)$ at the origin, and set $\tilde{Q}_\theta=m \circ \bar{Q}_\theta(z) \circ m^{-1}$.  Define the filled Julia set of $\tilde{Q}_\theta$ by
$$K(\tilde{Q}_\theta)=\left\{z \in \field{C}: {\rm the \quad orbit \quad of} \quad \{ \tilde{Q}_\theta^{\circ n} \}_{n \ge 0} \quad {\rm is \quad bounded}  \right\},$$
and the Julia set
$$J(\tilde{Q}_\theta)=\partial K(\tilde{Q}_\theta).$$

When the rotation number $\theta$ is irrational of bounded type, the action of $\tilde{Q}_\theta$ is conjugate to that of a quadratic polynomial. This follows from an argument due to Douady, Ghys, Herman and Shishikura (cf \cite{Do}) which we will now present.

\begin{figure}
\centering
\resizebox{0.65\textwidth}{!}{%
  \includegraphics{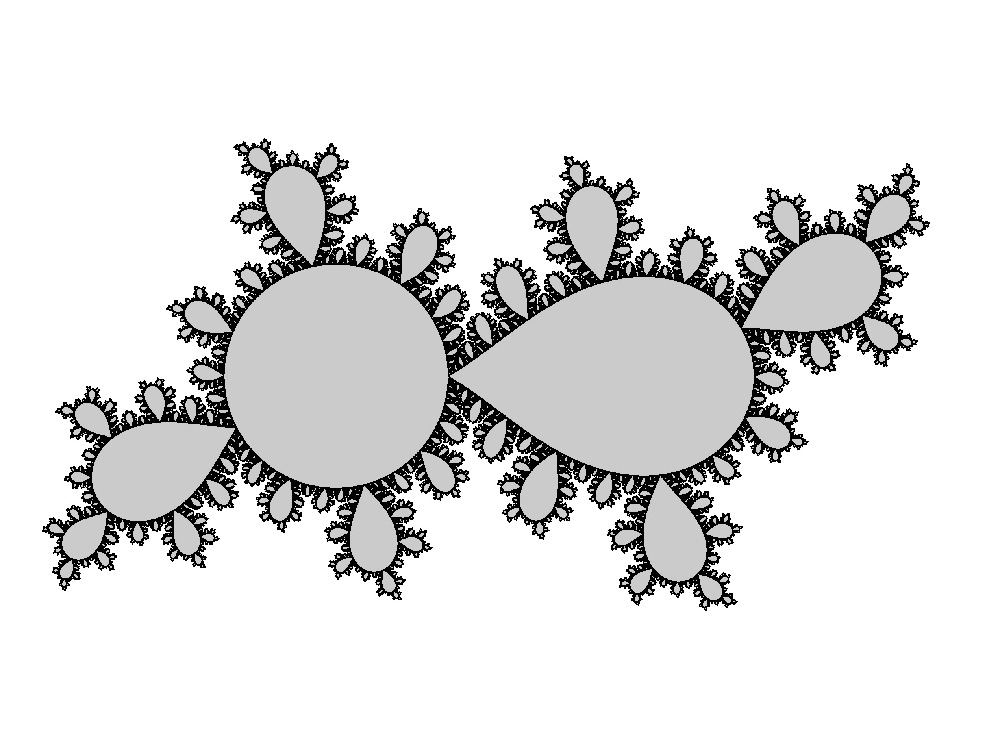}}
\caption{The filled Julia set of the modified Blaschke product with the golden mean rotation number.}
\label{fig:4}       
\end{figure}

Suppose that $0 < \theta < 1$ is an irrational of bounded type, and $H$ is a quasiconformal extension of the quasisymmetric conjugacy on the circle.  Recall that a {\it conformal structure} on a Riemann surface $S$  is conformal equivalence class of Riemann metrics on $S$. The {\it standard conformal structure} $\sigma_0$ on $\field{C}$ is a conformal equivalence class of the metric $d s^2=d x^2 +d y^2$. Define a new conformal structure on the plane, $\sigma_\theta$, invariant under $\tilde{Q}_\theta$, as the pull-back $H^* \sigma_0$ of $\sigma_0$ under $H$. Since $R_\theta$ preserves the standard conformal structure, $\tilde{Q}_\theta$ preserves $\sigma_\theta$ on $\field{D}$. Next, for every $n \ge 1$ pull $\sigma_\theta \arrowvert_\field{D}$ back by $\tilde{Q}_\theta^{\circ n}$ on the union off all $n$-th preimages of $\field{D}$ under $\tilde{Q}_\theta$, different from $\field{D}$. Notice, that since $\tilde{Q}_\theta$ is holomorphic outside of the unit disk, the dilatation of the pull-backs of $\sigma_\theta$ will not be increased. Finally, set $\sigma_\theta=\sigma_0$ outside of all preimages of $\field{D}$. Such $\sigma_\theta$ has a bounded dilatation and is $\tilde{Q}_\theta$ invariant. Therefore, by the Measurable Riemann Mapping Theorem (cf \cite{AB}, \cite{Bers} and \cite{Boy}), there exists a unique quasiconformal homeomorphism $f: \field{C} \mapsto \field{C}$, $f(\infty)=\infty$, $f(0)=0$ and $f(1)=1$, such that $f^* \sigma_0=\sigma_\theta$.  Set
$$f_\theta= f \circ \tilde{Q}_\theta \circ f^{-1}.$$

This $f_\theta$ is a self-map of the sphere that preserves $\sigma_0$, therefore it is holomorphic. It is, furthermore, a proper map of degree $2$ (since $\tilde{Q}_\theta$ is), therefore it is a quadratic polynomial. Since $f_\theta \arrowvert_{f(\field{D})}$ is quasiconformally conjugate to a rigid rotation, $f(\field{D})$ is contained in the  Siegel disk for $f_\theta$. Since $f(1)=1$ is a critical point of $f_\theta$,  $\overline{\{f_\theta^{\circ n}(1)  \}_{n \ge 0}}$    is the boundary of the Siegel disk, while $\{f_\theta^{\circ n}(1)  \}_{n \ge 0}$ itself is also dense in $f(\field{T})$, therefore $f(\field{T})$ is the boundary of the Siegel disk. Due to our normalization of  $f$, we must have
\begin{equation}
f_\theta=\qp.
\end{equation}

The above discussion is a sketch of the proof of the following theorem.
\begin{theorem} (Douady, Ghys, Herman, Shishikura)
Let  $f_\theta$ be a quadratic polynomial which has a fixed Siegel disk $\Delta_\theta$ of rotation number $\theta$ of bounded type. Then $f_\theta$ is quasiconformally conjugate to the modified  Blaschke product $\tilde{Q}_\theta$. In particular, $\partial \Delta_\theta$ is a quasicircle passing through the critical point of $f_\theta$.
\end{theorem}

\subsection{Bounds on the quasiconformal distortion of angles and eccentricities}
\label{crit_angle}

For $0<r<1$, let $\mu(r)$ be the modulus of the unit disk slit along the real axis from $0$ to $r$ -- the so-called {\it Gr\"otzsch's extremal domain}. The modulus function has the following explicit expression (cf \cite{Hertz})
\begin{equation}\label{mu}
\mu(r)={1 \over 4} {K'(r) \over K(r) },
\end{equation}
where $K(r)$ is the complete elliptic integral of the first kind
$$K(r)= \int_0^1 {d x \over \sqrt{1-x^2} \sqrt{1-r^2 x^2} },$$
and $K'(r)=K\left( \sqrt{1-r^2}  \right).$ In particular, the following asymptotic holds as $r \rightarrow 0$ (cf \cite{Hertz})
\begin{equation}\label{asympt_mu}
\mu(r)={1 \over 2 \pi} \ln {4 \over r} + O(r^2), \quad \mu(r) \le {1 \over 2 \pi} \ln {4 \over r},
\end{equation}
and
\begin{equation}\label{asympt_mu_inv}
\mu^{-1}(x)=4 e^{-2 \pi x}+O\left(e^{-4 \pi x} \right).
\end{equation}


Given $M \ge 1$, set
\begin{equation}\label{eq:phi_K}
\phi_M(r)=\mu^{-1}\left(M  \mu(r) \right).
\end{equation}
The ``distortion function'' $\phi_M$ is continuous and strictly increasing in $(0,1)$, with $\phi_M(0)=0$ and $\phi_M(1)=1$ (cf \cite{AgGe}). Furthermore, $\phi_M(t) \le t$.

The following result from \cite{AgGe}  will be important for our estimates
\begin{AGtheorem}  (S. B. Agard, F. W. Gehring) \label{AGthm}
 Suppose that $f$ is a $M$-quasiconformal mapping of the extended plane, and that $f(\infty)=\infty$. Then for each triple of distinct finite points $z_1$, $z_0$, $z_2$, 
$$\sin{ \beta \over 2} \ge \phi_M \left(\sin{\vartheta \over 2} \right),$$
where $\phi_M$ is as in $(\ref{eq:phi_K})$, and 
\begin{equation}
\label{alphabeta} \vartheta =\arcsin \left( { |z_1-z_2| \over |z_1-z_0|+|z_2-z_0|} \right), \quad  \beta  =\arcsin \left( { |f(z_1)-f(z_2)| \over |f(z_1)-f(z_0)|+|f(z_2)-f(z_0)|} \right).
\end{equation}
The inequality is sharp.
\end{AGtheorem}


We will end this subsection with a note about the relation between quasiconformality and quasisymmetry. We have already mentioned in  Subsection $\ref{blaschke}$, that a $K$-quasisymmetric map of a circle extends to the interior of the unit disk as a quasiconformal map whose constant of quasiconformality is at most $2 K-1$.  Conversely, $M$-quasiconformal maps, globally defined on $\field{C}$, are quasisymmetric with the quasisymmetric constant bounded in terms of $M$. The quantity that plays an important role in this bound is an estimate on distortion of eccentricities of circles under quasiconformal maps. A ``natural''  measure of the deviation of the image of a circle under an $M$-quasiconformal map from a round circle itself is the so-called ``circular distortion'' $\lambda(M)$, defined as 
\begin{equation}\label{def:circular_distortion}
\lambda(M)=\sup_{0 \le \phi \le 2 \pi} \{|f(e^{i \phi})|: f \in \cF \}, \quad \cF=\{f: \fC \mapsto \fC, \quad M-{\rm quasiconformal}, \quad f(0)=0, \quad f(1)=1\}
\end{equation}
(cf \cite{AIM}, page 81). It is known that 
\begin{equation}\label{eq:circular_dist_bound}
1 \le  \lambda(M) \le {1 \over 16} e^{\pi M}, \quad \lim_{M \rightarrow 1}\lambda(M)=1
\end{equation}
(cf \cite{AIM}, page 81). In particular, the circular distortion function is used to bound the constant of quasisymmetry in terms of the quasiconformal constant:
\begin{Atheorem} (K. Astala) 
If $f: \field{C} \mapsto \field{C}$ is  $M$-quasiconformal, and $z_0,z_1,z_2 \in \field{C}$,  then
$${|f(z_0)-f(z_1)| \over |f(z_0)-f(z_2)|}  \le \eta \left( {|z_0-z_1| \over |z_0-z_2|} \right),$$  
where
$$\eta(t)= \lambda(M)^{2 M} \max\{t^M, t^{1 / M}\}, \quad t \in [0, \infty),$$ 
and $\lambda(M)$ is the circular distortion defined in $(\ref{def:circular_distortion})$.
\end{Atheorem}

\subsection{The quasisymmetric property  and moduli}

Given any four points $a$, $b$, $c$ and $d$ on the real line, ordered as $a \le b \le c \le d$  or $a \ge b \ge c \ge d$, define their cross-ratio as 
$${\bf Cr}(a,b,c,d)={|a-b| \cdot |c-d| \over |a-c| \cdot |d-b|  }.$$

An important property of the cross-ratio is that it is preserved under linear-fractional transformations.

Following \cite{Sw}, we introduce a more general class of functions which play a role similar to the cross-ratio.
\begin{definition}\label{crmodulus}
A cross-ratio modulus is a function $\chi$ from all quadruples of points on the real line which satisfy $a \le b \le c \le d$, or $a \ge b \ge c \ge d$, with values in $[0, \infty)$, provided that
\begin{itemize}
\item[-] there is a constant $C$ such that if ${\bf Cr}(a,b,c,d) \ge 1/4$, then $\chi(a,b,c,d) \ge C$;
\item[-] for every $\epsilon >0$ there is a $\delta>0$ so that if ${\bf Cr}(a,b,c,d) < \delta$, then $\chi(a,b,c,d) < \epsilon$.
\end{itemize}
\end{definition}

\begin{example}
Consider a Jordan curve $\gamma \in \hat{\field{C}}$ with four ordered points on it, $A,B,C$ and $D$. The four points divide the curve into four arcs. Such configuration defines a {\it quadrilateral} $Q(A,B,C,D)$. Choose one of the arcs to be the {\it base}, and map one of the components of the complement of $\gamma$ conformally onto the rectangle with the vertices $(0,1,1+i a, i a)$, so that the base is mapped onto $(0,1)$. The parameter $a$ is called the modulus of the quadrilateral, and denoted $\mod Q(A,B,C,D)$. 

In the particular case when $\gamma=\field{R}$, set
\begin{equation}\label{qmodulus}
\chi(A,B,C,D)={1 \over \mod{Q(A,B,C,D)}}.
\end{equation}
If $A=B$ or $C=D$, set $\chi(A,B,C,D)=0$.

The modulus of a quadrilateral is related to the modulus of the {\it ring domain}. Let $A$, $B$, $C$ and $D$ be points on the real line, and let $v$ be a Jordan arc inside the closed upper half plane connecting $A$ and $D$. Intervals $(A,B)$, $(B,C)$, $(C,D)$ and the arc $v$ define a quadrilateral $Q'$ with the base $(A,B)$. Let $Q''$ be its reflection with respect to the real line. The union of the interiors of $Q'$, $Q''$ and the intervals $(A,B)$ and $(C,D)$ define the ring domain of $R(Q')$ {\it associated with $Q'$}. The moduli of a quadrilateral and of the associated ring domain are related as 
$$\mod{R(Q')}={1 \over 2 \mod{Q'}}.$$
This can be seen from the following argument. Map the ring domain $R(Q')$ conformally to an annulus with the radii $1$ and $\rho>1$, whose modulus is $(2 \pi)^{-1} \ln \rho$, so that the interval $(A,B)$ gets mapped to the interval $(-\rho, -1)$. The same conformal transformation maps the quadrilateral $Q'$ to the upper semi-annulus. A further application of the logarithm, maps the semi-annulus to a quadrilateral $(0,\ln \rho, \ln \rho +i \pi, i \pi)$ (notice, that the base $(A,B)$ is mapped to $(i \pi, \ln \rho +i \pi)$). The modulus of the last rectangle is $\pi / \ln \rho$.

We will now describe how the moduli of the quadrilaterals $Q(A,B,C,D)$ and $Q'$ are related. Consider the set of {\it admissible curves} - all curves  contained inside the quadrilateral and joining a point in the base with a point in the opposite side. Let $Q_1$ and $Q_2$ be any two quadrilaterals. If every admissible curve for a quadrilateral $Q_1$ is contained in some admissible curve of $Q_2$, then $\mod Q_1 \ge \mod Q_2$.  Hence,  $\mod Q' \ge \mod Q(A,B,C,D)$.  On the other hand, one can also bound from above the modulus of $Q'$ in terms of that of $Q(A,B,C,D)$:
\begin{lemma} (Lemma 2.1 in \cite{Sw}) \label{2.1}
Suppose that points $A,B,C,D$ are cyclically ordered on $\field{R}$. For every $k >0$, if $|D-A| \le k/2$, then there is a Jordan arc $\gamma$ in the closed upper-half plane with the endpoints $D$ and $A$, such that the quadrilateral $Q'$ with sides $(A,B)$, $(B,C)$, $(C,D)$ and $\gamma$ has the interior disjoint from $\{z \in \field{C}: \Im {z} \ge k  \}$, and satisfies
$$\mod Q' \le {1 \over 1-{|D-A| \over k}   } \mod Q(A,B,C,D).$$
\end{lemma}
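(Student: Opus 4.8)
The plan is to construct the arc $\gamma$ explicitly as an arc of a semicircle (or a flattened circular arc) over the segment $[A,D]$, so that $Q'$ lies inside a horizontal strip of height $<k$, and then to compare the modulus of $Q'$ with that of $Q(A,B,C,D)$ by exhibiting an explicit quasiconformal (indeed, piecewise affine in the vertical direction) homeomorphism between the two quadrilaterals whose dilatation is controlled by $1-|D-A|/k$. Concretely, since $|D-A|\le k/2$, the upper semicircle $\Gamma$ with diameter $[A,D]$ has its highest point at height $|D-A|/2 < k/2 < k$, so taking $\gamma=\Gamma$ already gives a $Q'$ whose closure is disjoint from $\{\Im z \ge k\}$; one can even take a circular arc of smaller height if a sharper constant is wanted, but the semicircle suffices for the stated inequality.

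First I would set up the comparison of moduli via admissible curves, exactly as in the paragraph preceding the statement: the modulus $\mod Q$ of a quadrilateral $Q$ equals the extremal length of the family of curves joining the base $(A,B)$ to the opposite side $(C,D)$ inside $Q$, so an upper bound on $\mod Q'$ follows from choosing a good metric $\rho$ on $Q'$. I would take $\rho$ to be the pushforward, under a suitable homeomorphism $F: Q(A,B,C,D)\to Q'$, of the extremal (Euclidean) metric on $Q(A,B,C,D)$. The map $F$ should be the identity on the three segment-sides $(A,B)$, $(B,C)$, $(C,D)$ and should map the upper boundary $\field{R}$ (well, the degenerate ``top side'' of the half-plane quadrilateral) to the arc $\gamma$; the natural choice is $F(x+iy) = x + i\,(y + h(x))$ where $h$ is the function whose graph is $\gamma$ over $[A,D]$ and which is extended by $0$ outside, or more carefully a map that vertically stretches by a factor $1 + h(x)/k'$ on a strip of height $k'$ just above $\field{R}$ and is the identity above that. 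The key point is that the vertical stretching factor never exceeds $k/(k-|D-A|) = 1/(1-|D-A|/k)$, because $\max h = |D-A|/2 \le |D-A|$ and we may arrange the stretch to be spread over a vertical distance at least $k - |D-A|$; horizontal distances are unchanged. Since multiplying a metric by a factor $c$ multiplies extremal length by at most $c$ in the relevant direction (curves joining base to opposite side are ``mostly vertical''), one gets $\mod Q' \le \frac{1}{1-|D-A|/k}\,\mod Q(A,B,C,D)$.

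The main obstacle I anticipate is making the last extremal-length estimate genuinely rigorous rather than heuristic: it is not literally true that every admissible curve of $Q'$ is a $c$-Lipschitz image of an admissible curve of $Q(A,B,C,D)$ with $c=1/(1-|D-A|/k)$ unless one is careful about how $F$ acts near the side segments (where curves can run horizontally). The clean way around this is to not try to control the map pointwise but instead to bound $\mod Q'$ directly by the method of extremal metrics: take on $Q'$ the metric $\rho \equiv 1$ restricted to the region $\{0 \le \Im z \le k\}$ scaled appropriately, estimate $\int_{Q'}\rho^2$ (the area, which is at most $\mathrm{width}\cdot k$ plus lower-order terms from the bump, comparable to that of a rectangle of height controlled by $k$) and estimate from below the $\rho$-length of every admissible curve (at least the $\rho$-length it must have to cross from the base to the opposite side, which is at least the corresponding quantity for $Q(A,B,C,D)$ diminished by the factor $1-|D-A|/k$ coming from the at-most-$|D-A|$ extra vertical room the bump creates inside a strip of available height $k$). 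Then $\mod Q' \le \mathrm{area}/(\inf\text{length})^2$ combined with the Euclidean computation of $\mod Q(A,B,C,D)$ yields the bound. I would present this as the core computation, citing \cite{Sw} for the original argument, and remark that the constant $k/2$ in the hypothesis is exactly what guarantees the bump height $|D-A|/2$ stays below the strip height and the denominator $1-|D-A|/k \ge 1/2$ stays positive.
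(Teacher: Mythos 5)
This lemma is quoted from \cite{Sw} and is not actually proved in the paper, so there is no in-paper argument to compare against; assessing your proposal on its own terms, it has a fatal flaw at the very first step. The choice of $\gamma$ as the fixed semicircle over $[A,D]$, which you assert ``suffices for the stated inequality,'' cannot work. The right-hand side $(1-|D-A|/k)^{-1}\,\mod Q(A,B,C,D)$ tends to $\mod Q(A,B,C,D)$ as $k\to\infty$. On the other hand, for \emph{any} fixed Jordan arc $\gamma$ from $D$ to $A$ other than the unbounded arc through $\infty$, the quadrilateral $Q'$ is a proper subdomain of the upper half-plane $\field{H}$, and the strict gap $\mod Q'>\mod Q(A,B,C,D)$ is bounded away from zero independently of $k$: if $\phi$ is the conformal map of $\field{H}$ onto the rectangle $[0,1]\times[0,a]$, $a=\mod Q(A,B,C,D)$, and $\rho_0=|\phi'|$ is the extremal metric, then every admissible curve of $Q'$ is one of $\field{H}$, so
$$\mod Q' \;\ge\; \frac{\bigl(\inf_{\sigma}\int_\sigma\rho_0\bigr)^2}{\int_{Q'}\rho_0^2}\;\ge\;\frac{a^2}{\int_{Q'}\rho_0^2}\;>\;a,$$
the last inequality being strict because $\rho_0>0$ and $\field{H}\setminus\overline{Q'}$ has interior. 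Once $k$ is large enough that $(1-|D-A|/k)^{-1}$ drops below this fixed ratio, your inequality fails. The arc in the lemma must therefore depend on $k$ and must sweep out toward the far arc $(D,\infty,A)$ as $k\to\infty$, so that $Q'$ exhausts $\field{H}$; a semicircle of height $|D-A|/2$ does neither.

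The secondary ideas do not repair this. The map $F(x+iy)=x+i(y+h(x))$ is a homeomorphism of $\field{H}$ onto the region \emph{above} $\gamma$, i.e.\ onto $\field{H}\setminus\overline{Q'}$, not onto $Q'$, so it does not even have the right target; and your fallback length--area (Rengel-type) estimate compares $\mod Q'$ only with Euclidean data of $Q'$, while $\mod Q(A,B,C,D)$ is a conformal invariant of the cross-ratio of $A,B,C,D$ that can be arbitrarily large or small for points contained in an interval of a given length, so no purely Euclidean estimate on $Q'$ can reproduce it. The missing ingredient is that $\gamma$ must be constructed through the uniformizing map $\phi$ itself (for instance as the $\phi$-preimage of a curve in the rectangle that hugs the side corresponding to $(D,\infty,A)$ but avoids $\phi(\{\Im z\ge k\})$), with the ``gap'' $\delta$ controlled by a harmonic-measure or extremal-length estimate of how far $\{\Im z\ge k\}$ penetrates in the $\rho_0$-metric; it is that conformal estimate, not a Euclidean construction, that produces the factor $1-|D-A|/k$.
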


We will now show  that $\chi$ in $(\ref{qmodulus})$ is indeed a cross-ratio modulus. 

Let $Q'$ be the quadrilateral with the sides $(A,B)$, $(B,C)$, $(C,D)$ and a semicircular arc $\gamma$ connecting points $A$ and $D$. If  ${\bf Cr}(A,B,C,D) \ge 1/4$ then 
$$\left(1+{|B-C| \over |A-B|} \right) \left(1+{|B-C| \over |C-D|} \right) \le 4 \implies |A-B| \ge {1 \over 3} |B-C| \quad {\rm and}  \quad  |C-D| \ge {1 \over 3} |B-C|,$$
and, therefore, the ring domain $R(Q')$ contains an annulus with modulus at least 
$${1 \over 2 \pi} \ln{{1 \over 2}+{1\over3} \over {1 \over 2}} ={1 \over 2 \pi} \ln {5 \over 3}.$$
Since $\mod Q(A,B,C,D) \le \mod Q'$, we get 
\begin{equation}\label{chi_const}
\chi(A,B,C,D)={1 \over \mod Q(A,B,C,D)} \ge {1 \over \mod Q'}= 2 \mod R(Q')  \ge {1 \over \pi} \ln {5 \over 3}.
\end{equation}

The second condition in the definition of the cross-ratio modulus follows from the above Lemma and  the following Teichm\"uller's Modulus Theorem (cf page 56 in \cite{LeVi}). 
\begin{Ttheorem}\label{tt}
If a ring domain $R$ separates points $0$ and $z_1$ from  $z_2$ and  $\infty$, then 
$$\mod R \le 2 \mu \left( \sqrt{|z_1| \over |z_1|+|z_2|}  \right),$$
where $\mu$ is as in $(\ref{mu})$. 
\end{Ttheorem}

Now, we can demonstrate the  second property of the cross-ratio modulus. Assume that  ${\bf Cr}(A,B,C,D)< \delta$. This inequality implies that 
\begin{equation}\label{eps}
\min\left\{ {|A-B| \over |C-B|}, {|C-D| \over |C-B|} \right\} <  \delta^{1 \over 2}  /(1-\delta^{1 \over 2}) \equiv\delta'.
\end{equation}
By Lemma $\ref{2.1}$, there exists a quadrilateral $Q'$ with the base $(A,B)$, sides $(B,C)$, $(C,D)$, and an  arc $\gamma$ connecting $A$ and $D$, disjoint from $\{z \in \field{C}: \Im {z} \ge 2 |D-A|  \}$, whose modulus satisfies
$$2 \mod Q \ge  \mod Q'.$$
By  Teichm\"uller's Modulus Theorem,
$$\mod R(Q') < 2 \mu \left( \sqrt{1 \over 1+\delta'}  \right),$$
and
\begin{equation}
\label{cond2} \chi(A,B,C,D)={1 \over \mod Q} \le { 2  \over \mod Q'} \le 4 \mod R(Q') \le 8   \mu \left( \sqrt{1 \over 1+\delta'}  \right)=8   \mu \left( \sqrt{1 -\delta^{1 \over 2}}  \right) \equiv \epsilon.
\end{equation}
\end{example}

A configuration of $n$ quadruples of points  $(a_i,b_i,c_i,d_i)$, $i=1, \ldots, n$, will be called {\it allowable}, if the intervals $(a_i,d_i)$ are pairwise disjoint modulo $1$ and $d_i-a_i <1$. We will also say that a configuration of $n$ quadruples of points  $(a_i,b_i,c_i,d_i)$, $i=1, \ldots, n$, has the {\it intersection number} $k$  if the supremum over points $x \in \field{R}$ of $k'$ such that $x$ is contained modulo $1$ in $k'$ intervals from the configuration, is equal to $k$.

\begin{definition}\label{cross-ratio-def}
Let $g: \field{R} \mapsto \field{R}$ be strictly increasing and suppose that $g(x)-x$ is $1$-periodic. Let $\chi$ be a cross-ratio modulus. We say that $g$ satisfies the cross-ratio inequality with respect to $\chi$ with bound $Q$ iff for any choice of quadruples of points $(a_i,b_i,c_i,d_i)$, $i=1, \ldots, n$, in an allowable configuration the following holds:
$$\prod_{i=1}^n {\chi(g(a_i),g(b_i),g(c_i),g(d_i)) \over \chi(a_i,b_i,c_i,d_i) } \le Q.$$ 
\end{definition}

We mention the following Lemma without the proof.

\begin{lemma} (Lemma 1.1. from \cite{Sw}) \label{1.1}
Suppose that $g$ satisfies the cross-ratio inequality with respect to $\chi$ with the bound $Q$. Let $(a_i, b_i, c_i, d_i)$ be any configuration of quadruples of points with the intersection number $k$. Then
$$\prod_{i=1}^n {\chi(g(a_i),g(b_i),g(c_i),g(d_i)) \over \chi(a_i,b_i,c_i,d_i) } \le Q^{2 k}.$$ 
\end{lemma}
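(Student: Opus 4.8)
\textbf{Proof proposal for Lemma \ref{1.1}.} The plan is to reduce the general configuration with intersection number $k$ to a bounded number of allowable (intersection number $1$) configurations, apply the cross-ratio inequality to each, and multiply the resulting bounds. First I would observe that the product on the left is a product over the index set $\{1,\dots,n\}$, and that the logarithm of a product of ratios over a partition of the index set is the sum of the logarithms over the blocks; so it suffices to split $\{1,\dots,n\}$ into at most $2k$ subsets, each of which indexes an allowable configuration, since then the left-hand side is a product of at most $2k$ factors, each bounded by $Q$ by Definition \ref{cross-ratio-def}.

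The combinatorial heart is therefore the claim that a family of $n$ intervals $(a_i,d_i)$ (thought of modulo $1$, each of length $<1$) with intersection number $k$ can be partitioned into at most $2k$ subfamilies, each of which consists of pairwise disjoint (mod $1$) intervals. I would prove this by a greedy/interval-graph coloring argument: the ``overlap graph'' on the intervals has clique number at most $k$ (that is precisely what intersection number $k$ says — no point lies in more than $k$ of them), and arcs on a circle form a family whose chromatic number is bounded by twice the clique number. Concretely, cut the circle at a generic point to turn the circular-arc family into a mixture of ordinary intervals on $\field{R}$ plus the (at most $k$) arcs straddling the cut; the ordinary intervals on a line form an interval graph, which is perfect, hence $k$-colorable, and the straddling arcs can be colored with $k$ further colors since there are at most $k$ of them and they may all conflict. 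This yields a $2k$-coloring, i.e. a partition into $\le 2k$ allowable configurations. (One should double check that $d_i - a_i < 1$ is exactly the hypothesis that makes each interval a genuine arc, not the whole circle, so that the circular-arc coloring bound applies.)

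Given the partition $\{1,\dots,n\} = \bigsqcup_{j=1}^{m} S_j$ with $m \le 2k$ and each $\{(a_i,b_i,c_i,d_i)\}_{i \in S_j}$ allowable, I would finish with
$$\prod_{i=1}^n {\chi(g(a_i),g(b_i),g(c_i),g(d_i)) \over \chi(a_i,b_i,c_i,d_i)} = \prod_{j=1}^m \prod_{i \in S_j} {\chi(g(a_i),g(b_i),g(c_i),g(d_i)) \over \chi(a_i,b_i,c_i,d_i)} \le \prod_{j=1}^m Q = Q^m \le Q^{2k},$$
using $Q \ge 1$ (which follows from the first bullet of Definition \ref{crmodulus} applied with equal points, or may simply be assumed) to absorb the possibly fewer-than-$2k$ factors into the bound $Q^{2k}$. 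The only place where care is genuinely needed is the coloring step: the constant $2k$ rather than $k$ is forced by the circular (mod $1$) nature of the configuration, and one must make the cut-point argument precise enough to be sure no subfamily secretly contains two overlapping arcs.

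\textbf{Main obstacle.} I expect the combinatorial coloring lemma — partitioning a circular-arc family of clique number $k$ into $2k$ pairwise-disjoint subfamilies — to be the only nonroutine step; everything else is bookkeeping with the multiplicativity of the product and the definitions. In the source paper (\'Swi\c{a}tek) this is presumably handled by an explicit inductive extraction of ``maximal disjoint chains'', which is an equivalent route: repeatedly pull out a maximal subfamily of pairwise disjoint arcs, show each extraction drops the intersection number, and conclude the process terminates in $\le 2k$ steps.
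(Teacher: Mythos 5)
The paper itself cites this as Lemma 1.1 of \'Swi\c{a}tek and explicitly omits the proof, so there is no in-paper argument to compare against; your decomposition-and-coloring argument is the standard one and almost certainly matches the source. It is correct: cut the circle at a point $p$; the at most $k$ arcs through $p$ (bounded by the intersection number) receive singleton color classes, while the remaining arcs become genuine intervals on the line after the cut, where max coverage equals clique number by the Helly property, so a greedy left-to-right coloring uses at most $k$ further colors. This partitions the index set into at most $2k$ allowable configurations, each contributing a factor at most $Q$.

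Two small points worth tightening. First, your parenthetical remark ``the overlap graph on the intervals has clique number at most $k$ --- that is precisely what intersection number $k$ says'' conflates two notions: intersection number is max coverage, and for circular arcs the Helly property fails, so the clique number of the overlap graph can exceed the max coverage. Your actual argument never uses the clique number of the circular-arc graph --- it uses max coverage at the cut point and clique number of the resulting linear interval graph --- so nothing breaks, but the phrasing is misleading. Second, the final step $Q^m \le Q^{2k}$ with $m \le 2k$ needs $Q \ge 1$; this is harmless (for the modulus-based $\chi$, translation/conformal invariance forces any valid bound to satisfy $Q \ge 1$, and in any case one may replace $Q$ by $\max\{Q,1\}$), but you are right to flag it rather than pass over it silently.
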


\section{Statement of results} \label{statements}

We are now ready to state the main result of the paper in a more detailed form. 

\begin{mainthm} \label{maintheorem}
Suppose that $\theta=[a_1, a_2, \ldots ]$, $a_i < B$, is a quadratic irrational whose continued fraction is preperiodic with period $s$, and suppose that $\{p_n / q_n\}$ is the sequence of the best rational approximants of $\theta$. Then there exists $C(n)>1$, with $\lim_{n \mapsto \infty} C(n)=1$, such that the following holds.

\medskip

\noindent 1) If $s$ is odd, then
\begin{equation}\label{b1.1}
\alpha^{\gamma}  \le {|\qp^{q_{n+s}}(1)-1| \over |\qp^{q_{n}}(1)-1|} \le {C(n) \over \sqrt{K^{-1} \left(1 \over 1+K  \right)^{\log_2{\alpha^2 \over 1-\alpha^2}+1}+1}}, \quad {\rm if} \quad  \alpha  > {1 \over \sqrt{2}},
\end{equation}
and
\begin{equation}\label{b1.2}
\alpha^{\gamma} \le {|\qp^{q_{n+s}}(1)-1| \over |\qp^{q_{n}}(1)-1|}  \le C(n) \left(1+K^{-1} \right)^{-{1 \over 2} [s \log_2 \vartheta]}, \quad {\rm if} \quad  \alpha  \le {1 \over \sqrt{2}}.
\end{equation}
\medskip
\noindent 2) If $s$ is even, then
\begin{equation}\label{b2.1}
\alpha^{\gamma}  \le {|\qp^{q_{n+s}}(1)-1| \over |\qp^{q_{n}}(1)-1|} \le {C(n) \over K^{-1} \left(1 \over 1+K  \right)^{\log_2{\alpha \over 1-\alpha}+1}+1}, \quad {\rm if} \quad  \alpha  > {1 \over 2}, 
\end{equation}
and
\begin{equation}\label{b2.2}
\alpha^{\gamma} \le {|\qp^{q_{n+s}}(1)-1| \over |\qp^{q_{n}}(1)-1|}  \le C(n) \left(1+K^{-1} \right)^{-[s \log_2 \vartheta]}, \quad {\rm if} \quad  \alpha  \le {1 \over 2}.
\end{equation}
In the above bounds, $\vartheta=\alpha^{-{1 \over s}}$,
\begin{equation}\label{eq:Ks} 
K=\lambda(2K_2-1)^{4 K_2-2} K_2^{2 K_2-1}, \quad K_2=\max\left\{2, K_1^{B+1} \right\}, \quad K_1=2 \left(1-\left(\mu^{-1}\left( {1 \over 8^7 \pi} \ln {5 \over 3}   \right)   \right)^2 \right)^{-6},
\end{equation}
where $\mu$ is the modulus $(\ref{mu})$  of the Gr\"otzsch's extremal domain, $\lambda(M) \le e^{\pi M}/16$ is the circular distortion defined in $(\ref{def:circular_distortion})$, and
$$\gamma=1-{16 \over \pi} \left( { \sqrt{2}(\sqrt{3}-1) \over 16} \right)^{2 K_2-1 \over 2} \left(1+O \left( \left( { \sqrt{2}(\sqrt{3}-1) \over 16} \right)^{2 K_2-1} \right) \right).$$
\end{mainthm}

\section{An upper bound on the scaling ratio}

The following is a version of the H\"older continuity property for quasisymmetric homeomorphisms. The H\"older property is a classical result (cf \cite{Ahl}, \cite{deFaria}); here we will prove a Lemma adopted for our situation.

\begin{lemma} \label{SRbounds}

Let $T$ be an interval in $\field{R}$, and $h: T \mapsto h(T)$ be a quasisymmetric homeomorphism with a quasisymmetric constant $K$. 

\noindent 1) If $I \subset J$ are two intervals in $T$ sharing a single boundary point $c$, and satisfying $|I| / |J|=\alpha \le 1/2$, then
$$\left({1 \over 1+K}\right)^{[s \log_2 \vartheta ]+1}  \le {|h(I) | \over  |h(J)|}  \le \left({1 \over 1+K^{-1}} \right)^{[s \log_2 \vartheta]}, \quad \vartheta=\alpha^{-{1 \over s}}.$$

\noindent 2) If $I$ and $J$ are two closed intervals in $T$ such that their intersection is a single boundary point $I \cup J=\{c\}$, and satisfying $|I| / |J|=\alpha<1$, then
$$ {1 \over (1+K)^{\left[s \log_2 {\nu}\right]+1}-1  } \le {|h(I)| \over |h(J)|} \le {1 \over (1+K^{-1})^{\left[s \log_2 \nu \right]}-1  }, \quad \nu=\left( {1+\alpha \over \alpha} \right)^{{1 \over s}}.$$

\noindent 3) Let $I \subseteq J$ be two intervals in $T$ sharing a single boundary point $c$, such that $|I| / |J|=\alpha > 1/2$. Let $\tilde{I} \supset J \setminus I$ be a closed interval of length $|I|$ sharing one endpoint with $I$  and  $J \setminus I$. Suppose that $\tilde{I} \subset T$,  then
$$ {1 \over K \left(1 \over 1+K^{-1}  \right)^{\left[ \log_2{\alpha \over 1-\alpha}\right] }+1} \le {|h(I)| \over |h(J)|} \le {1 \over K^{-1} \left(1 \over 1+K  \right)^{\log_2{\alpha \over 1-\alpha}+1} +1}.$$

\end{lemma}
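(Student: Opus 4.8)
The three statements are all of the same flavor: an interval $J$ is subdivided (either $I$ is an initial sub-interval, or $I$ and $J$ abut, or $\tilde I$ is a translate of $I$ inside $J$), and we want to control the ratio $|h(I)|/|h(J)|$ by iterating the quasisymmetry inequality $\dl$-many times along a dyadic chain of comparable sub-intervals. The plan is to build, for a given ratio $\alpha$, a chain of $m=[s\log_2\vartheta]$ (resp.\ $[s\log_2\nu]$, resp.\ $[\log_2\tfrac{\alpha}{1-\alpha}]$) nested or adjacent intervals each roughly half (or double) the size of the next, apply the one-step estimate coming from $K$-quasisymmetry at each link, and multiply. I would first record the elementary one-step facts: if $U\subset V$ share an endpoint with $|U|=|V\setminus U|$ (so $|U|/|V|=1/2$), then $K$-quasisymmetry of $h$ gives $\tfrac{1}{1+K}\le |h(U)|/|h(V)|\le \tfrac{1}{1+K^{-1}}$, directly from $\tfrac{1}{K}\le |h(U)|/|h(V\setminus U)|\le K$ and $|h(V)|=|h(U)|+|h(V\setminus U)|$. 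This is the engine; everything else is bookkeeping of how many times it is applied and how the non-dyadic part of $\alpha$ is absorbed.

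For part 1), since $\alpha\le 1/2$ we may insert the points that cut $J$ into halves repeatedly: after $m=[s\log_2\vartheta]$ bisections we reach a sub-interval $J_m$ with $|J_m|/|J|=2^{-m}\ge \alpha$ and $|J_m|/|J|<2\alpha\cdot 2^{-1}$... more precisely $\alpha\le 2^{-m}$ isn't automatic, so I would instead choose $m$ so that $2^{-m}\le \alpha<2^{-m+1}$, i.e.\ $m=[\log_2(1/\alpha)]=[\,-\log_2\alpha\,]$; the factor $s$ enters because $\vartheta=\alpha^{-1/s}$ gives $s\log_2\vartheta=-\log_2\alpha$, so $[s\log_2\vartheta]$ is exactly this $m$ up to the integer-part issue, which is why the exponents in the statement carry a ``$+1$'' on the lower side. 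Telescoping the one-step bound over the chain $J=J_0\supset J_1\supset\cdots\supset J_m\supseteq I$ yields $(1+K^{-1})^{-m}$ on the upper side and $(1+K)^{-(m+1)}$ on the lower side after also accounting that $I$ sits between $J_m$ and $J_{m+1}$, which forces the extra factor. Part 2) is the same argument applied to the configuration where $I$ and $J$ are adjacent: here the natural quantity is $|I\cup J|/|J|=(1+\alpha)$, so one bisects the union $I\cup J$; writing $|h(I)|/|h(J)|=\big(|h(I\cup J)|/|h(J)|-1\big)$ and bounding $|h(I\cup J)|/|h(J)|$ by the nested-interval estimate with ratio $\tfrac{\alpha}{1+\alpha}$, i.e.\ with $\nu=\big(\tfrac{1+\alpha}{\alpha}\big)^{1/s}$, produces the stated ``$\tfrac{1}{(\cdot)^{[\,]+1}-1}\le\cdot\le\tfrac{1}{(\cdot)^{[\,]}-1}$'' form. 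Part 3) handles $\alpha>1/2$: now $J\setminus I$ is the small interval, with $|J\setminus I|/|I|=\tfrac{1-\alpha}{\alpha}<1$, and $\tilde I$ is a copy of $I$ straddling the junction, so $|J\setminus I|/|\tilde I|=\tfrac{1-\alpha}{\alpha}$; applying part of the nested estimate to $(J\setminus I)\subset \tilde I$ with $m'=[\log_2\tfrac{\alpha}{1-\alpha}]$ bisections (no $s$ here, since $\alpha$ itself is the periodic product and $\tfrac{\alpha}{1-\alpha}$ is what gets dyadically resolved) bounds $|h(J\setminus I)|/|h(\tilde I)|$, hence $|h(J\setminus I)|/|h(I)|$ up to a single extra factor of $K^{\pm1}$ from comparing $h(\tilde I)$ to $h(I)$ (two adjacent intervals of equal length), and finally $|h(I)|/|h(J)| = 1/\big(1+|h(J\setminus I)|/|h(I)|\big)$ gives the claimed two-sided bound.

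The main obstacle I anticipate is not any single estimate but the careful accounting of the floor functions and the ``leftover'' non-dyadic factor: $\alpha$ is generally not a power of $2$, so after $m=[\,\cdot\,]$ bisections one is left with a residual interval of ratio in $[1/2,1)$ to which one more (or one fewer) application of the one-step bound must be attached, and getting the asymmetry between the upper exponent ($[\,\cdot\,]$) and the lower exponent ($[\,\cdot\,]+1$) exactly right — in particular making sure the residual factor is bounded by $1$ on one side and by $(1+K)^{-1}$ on the other — is where the argument must be done with care. A secondary technical point is that in parts 2) and 3) one passes between a ratio of the form $p/(q-1)$ or $1/(1+r)$ and the nested-interval bounds, so one must verify that the relevant denominators stay positive, i.e.\ that $(1+K^{-1})^{[\,\cdot\,]}>1$ and $(1+K)^{[\,\cdot\,]+1}>1$, which holds as soon as the integer part is $\ge 1$; the degenerate case where the integer part vanishes (very large $\alpha$ in part 1), or $\alpha$ near $1/2$ in part 3)) should be checked separately or excluded by the hypotheses. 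I would also need to confirm that the required translated interval $\tilde I\subset T$ in part 3) and all the bisection points lie in $T$, which is exactly what the hypotheses ($I\subset J\subset T$, $\tilde I\subset T$) are there to guarantee, so this is immediate.
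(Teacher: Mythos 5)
Your proposal follows essentially the same route as the paper: establish the one-step dyadic estimate $(1+K)^{-1}\le |h(J_{n+1})|/|h(J_n)|\le (1+K^{-1})^{-1}$ and telescope along a chain of bisections, with parts 2) and 3) reduced to part 1) via an algebraic identity. One slip to fix in part 2): you write $|h(I)|/|h(J)| = |h(I\cup J)|/|h(J)| - 1$ and then invoke the nested-interval estimate with ratio $\alpha/(1+\alpha)$; but $\alpha/(1+\alpha)$ is $|I|/|I\cup J|$, not $|J|/|I\cup J|$, and if one actually uses $J\subset I\cup J$ (ratio $1/(1+\alpha)>1/2$) the integer part $[\log_2(1+\alpha)]$ vanishes and the bound is trivial. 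The correct reduction, as in the paper, is $|h(I)|/|h(J)| = 1/\bigl(|h(I\cup J)|/|h(I)|-1\bigr)$, applying part 1) to $I\subset I\cup J$ with ratio $\alpha/(1+\alpha)\le 1/2$. Your part 3) is correct and in fact more faithful to the Lemma's statement than the paper's own proof, which silently redefines $\tilde I$ as a length-$|J\setminus I|$ subinterval of $I$ adjacent to $J\setminus I$ (rather than the length-$|I|$ interval covering $J\setminus I$ described in the statement); either choice factors $|h(J\setminus I)|/|h(I)|$ into one adjacency factor $K^{\pm1}$ times one nested factor and lands on the same bound.
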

\begin{proof}
{\it 1) Case $I \subset J$}. Let $J_n$ be the unique closed subinterval of $J$ of length $2^{-n} |J|$ that shares the same end point with $I$ and $J$.  Since $h$ is quasisymmetric on $J$ with constant $K$, we have
\begin{equation}
{|h(J_{n+1})| \over |h(J_n)|} \le  {|h(J_{n+1})| \over |h(J_{n+1})|+ |h(J_n \setminus J_{n+1})|  }   \le {1 \over 1+ { |h(J_n \setminus J_{n+1})| \over |h(J_{n+1})| } }  \le {1 \over 1+K^{-1}}, 
\end{equation}
in a similar way,
$${1 \over 1+K} \le {|h(J_{n+1})| \over |h(J_n)|}.$$
 
We have $|h(J_n) | / |h(J)|=\prod_{i=0}^{n-1} |h(J_{i+1})| / |h(J_i)|$, therefore,
$$\left({1 \over 1+K}\right)^n \le {|h(J_n) | \over  |h(J)|} \le \left({1 \over 1+K^{-1}} \right)^n.$$

Now, set $m=[\log_2 \alpha^{-1}]+1$, then $J_m \subseteq I \subseteq J_{m-1}$, therefore,
$$\left({1 \over 1+K}\right)^{[-\log_2 \alpha ]+1} \le {|h(J_m) | \over  |h(J)|} \le {|h(I) | \over  |h(J)|} \le {|h(J_{m-1}) | \over  |h(J)|} \le \left({1 \over 1+K^{-1}} \right)^{[-\log_2 \alpha]}.$$

{\it 2) Case $I \cap J=\{c\}$}. 
$$
{|h(I)| \over |h(J)|}= {|h(I)| \over |h(J \cup I)|- |h(I)|}  =  {1 \over {|h(J \cup I)| \over |h(I)|}-1  }, 
$$
and, according to part $1)$,
$$ {1 \over (1+K)^{\left[-\log_2 {\alpha \over 1+\alpha}\right]+1}-1  } \le {|h(I)| \over |h(J)|} \le {1 \over (1+K^{-1})^{\left[-\log_2 {\alpha \over 1+\alpha}\right]}-1  } $$

{\it 3) Case $I \subseteq J$, $|I|/|J|=\alpha > 1/2$.}  Let $\tilde{I}$ be a closed subinterval of $I$ of length $|J \setminus I|$ such that the intersection of $\tilde{I}$ with $J \setminus I$ is a single boundary point of both intervals.
\begin{eqnarray}
\nonumber {|h(I)| \over |h(J)|} &=&{1 \over {|h(J \setminus I)| \over |h(I)|}+1} = {1 \over {|h(J \setminus I)| \over |h(\tilde{I})|}{|h(\tilde{I})| \over  |h(I)|} +1} \ge {1 \over K \left(1 \over 1+K^{-1}  \right)^{\left[ \log_2{\alpha \over 1-\alpha} \right]}+1}, \\
\nonumber {|h(I)| \over |h(J)|} &=& {1 \over {|h(\hat{I})| \over |h(I)|}{ |h(J \setminus I)| \over |h(\hat{I})| }+1} \le {1 \over K^{-1} \left(1 \over 1+K  \right)^{\log_2{\alpha \over 1-\alpha}+1} +1}.
\end{eqnarray}
\end{proof}
\qed

With regard to critical circle maps with a rotation number whose continued fraction is preperiodic, a particular case of which is the dynamical system $\qp \arrowvert_{\partial \sd}$, the above  Lemma essentially provides bounds on the scaling ratio, of the form $C_1 \delta_1^s \le |\lambda| \le C_2 \delta_2^s$. This can be seen as follows. Recall, that according to our discussion of the Blaschke product model of the Siegel disk in Section $\ref{blaschke}$, 
$$\qp=f \circ m \circ h^{-1} \circ \rot \circ h \circ  m^{-1} \circ f^{-1}$$ 
on $\partial \sd$, where $\rot$ is the rigid rotation by angle $\theta$, $f$ is the quasiconformal conjugacy of the modified Blaschke product $\tilde{Q}_\theta$ with the quadratic polynomial $\qp$, $h$ is the quasisymmetric (by the result of Herman and 'Swi{\c{a}}tek) conjugacy of $\rot$ with the Blaschke product $Q_\theta$ on the unit circle, and $m$ is an appropriately chosen M''obeus transformation. Now, take $J$ and $I$ to be the intervals $[1, \rot^{q_n}(1)]$ and $[1,\rot^{q_{n+s}}(1)]$. Then, Lemma $\ref{SRbounds}$ gives bounds on the ratios $[1, \qp^{q_{n+s}}(1)]/[1,\qp^{q_{n}}(1)]$ in terms of the quasisymmetric constant of $f \circ m \circ h^{-1}$.

The only remaining ingredient is a bound on the constant of quasisymmetry. This will be the subject of Section $\ref{qc_constant}$, where we will also show that this bound can be taken essentially independent of the rotation number. 

We notice, however, that for sufficiently large $K$, the lower bound on the quasisymmetric distortion of a ration of two intervals is of the order $\left(1+K \right)^{[-\log_2 \alpha ]+1}$ in Cases 1) and 2) in the above Lemma. But
$$\left({1 \over 1+K}   \right)^{[\log_2 \alpha^{-1}]+1} \le (1+K)^{\log_2 \alpha}=\alpha^{1 \over \log_{1+K}2},$$ 
the power of $\alpha$ in the last expression being larger than $1$. This means, that the lower bounds in the Lemma above are worse than the bound $|\lambda|\ge \alpha$ of Buff and Henriksen. In the next Section we will derive a better lower bound on the absolute value of the scaling ratio of the form $\alpha^{\gamma}$ with a bound on  $0<\gamma <1$.

\section{A lower bound on the scaling ratio}
\label{lower_bound}

Recall, that by the result of McMullen \cite{McM2} the rescalings of the Siegel disk and its preimage converge to $\Lambda$-invariant quasidisks $\cD$ and $\cD'$.
 
Consider the cylinder $C=\cD / \Lambda^2$. Let $f$ be the quasiconformal conjugacy between the dynamics of the modified Blaschke product $\tilde{Q}_\theta$ and that of $\qp$ on the sphere, as described in Section \ref{blaschke}.

 Let $\gamma_{{\min}}<\gamma_{{\max}}$ be any two angles such that the angle $\gamma$  between any two vectors $f(z_1)-1$ and $f(z_2)-1$, $z_i \ne 1$, of equal length, $|f(z_1)-1|=|f(z_2)-1|$, and with the end points $f(z_i) \in \partial \sd$ on ``opposite sides'' of the critical point, i.e. ${\rm signum}\{\Im{(z_1)}\} \ne {\rm signum}\{\Im{(z_2)}\}$, is asymptotically (that is, as $z_i \rightarrow 1$) bounded by $\gamma_{{\min}}$ and  $\gamma_{{\max}}$. The boundary of the Siegel disk is, therefore, asymptotically contained between the two curvilinear sectors  $\cS_{\min}$ and $\cS_{\max}$  with the vertex at $1$ and angles $\gamma_{\min}$ and $\gamma_{\max}$, respectively. Consider the images $\cT_{\min}$ and $\cT_{\max}$  of these sectors under the map $\ln{(z-1)}$: the two curvilinear strips  $\cT_{\min}$ and $\cT_{\max}$  whose height in every vertical section is $\gamma_{\min}$ and $\gamma_{\max}$, respectively  (see Fig. $\ref{fig:5}$).  Set
$$C_{{\max}}=\cT_{\gamma_{\max}} / L, \quad C_{{\min}}=\cT_{\gamma_{\min}} / L,$$
where $L(z)=z+\ln{\lambda \bar{\lambda}}$ if the period $s$ is odd, and $L(z)=z+\ln{\lambda^2}$ if $s$ is even.

By Rengel's inequality (see  e.g. Section $4.3$ of \cite{LeVi}) the width $-\ln{|\lambda|^2}$ of the fundamental domains for $C_{{\max}}$ and $C_{{\min}}$ satisfies
$$\left(-\ln{|\lambda|^2} \right)^2 \le { {\rm area}\  C_{\min} \over \mod C_{\min}} \ {\rm and} \ \left( -\ln{|\lambda|^2} \right)^2 \le { {\rm area} \ C_{\max} \over \mod C_{\max}}.$$ 
At the same time, since the height of  $\cT_{\min}$ and $\cT_{\max}$ in every vertical section is exactly $\gamma_{\min}$ and $\gamma_{\max}$, the areas of the fundamental domains are exactly equal to the areas of the parralelograms with one (vertical) side equal to $\gamma_{\min}$ or $\gamma_{\max}$, and the distance between the vertical sides  $-\ln{|\lambda|^2}$, and, therefore,
\begin{eqnarray}
\label{minbound} \left( \ln {1 \over |\lambda|^2} \right)^2 \le  {\gamma_{{\min}}  \ln {1 \over |\lambda|^2}  \over \mod C_{\min}}, \\
\label{maxboundd}\left( \ln {1 \over |\lambda|^2} \right)^2  \le {  \gamma_{{\max}}  \ln {1 \over |\lambda|^2}  \over \mod C_{\max}}.
\end{eqnarray}
We can now weaken the second of these inequalities (but not the first!), by using $\mod C \le \mod C_{\max}$, and, then, use the exact value of $\mod C$ from Lemma $\ref{BHlemma}$:
\begin{eqnarray}
\label{maxbound} \ln {1 \over |\lambda|^2}  \le {  \gamma_{{\max}}   \over \mod C_{\max}}  \le { \gamma_{\max}  \over \mod C}={ \gamma_{\max}  \over -{\pi  \over \ln \alpha^2}} \implies |\lambda| \ge \alpha^{\gamma_{{\max}} \over \pi}.
\end{eqnarray}

The direction of the inequality $(\ref{minbound})$ does not allow us to obtain an {\it upper bound} on $|\lambda|$ of the form $|\lambda| \le \alpha^{\gamma_{{\min}} \over \pi}$; we will estimate $|\lambda|$ from above  using the upper bounds  from Lemma $\ref{SRbounds}$.

\begin{figure}
\centering
{\includegraphics[width=5cm,angle=-90]{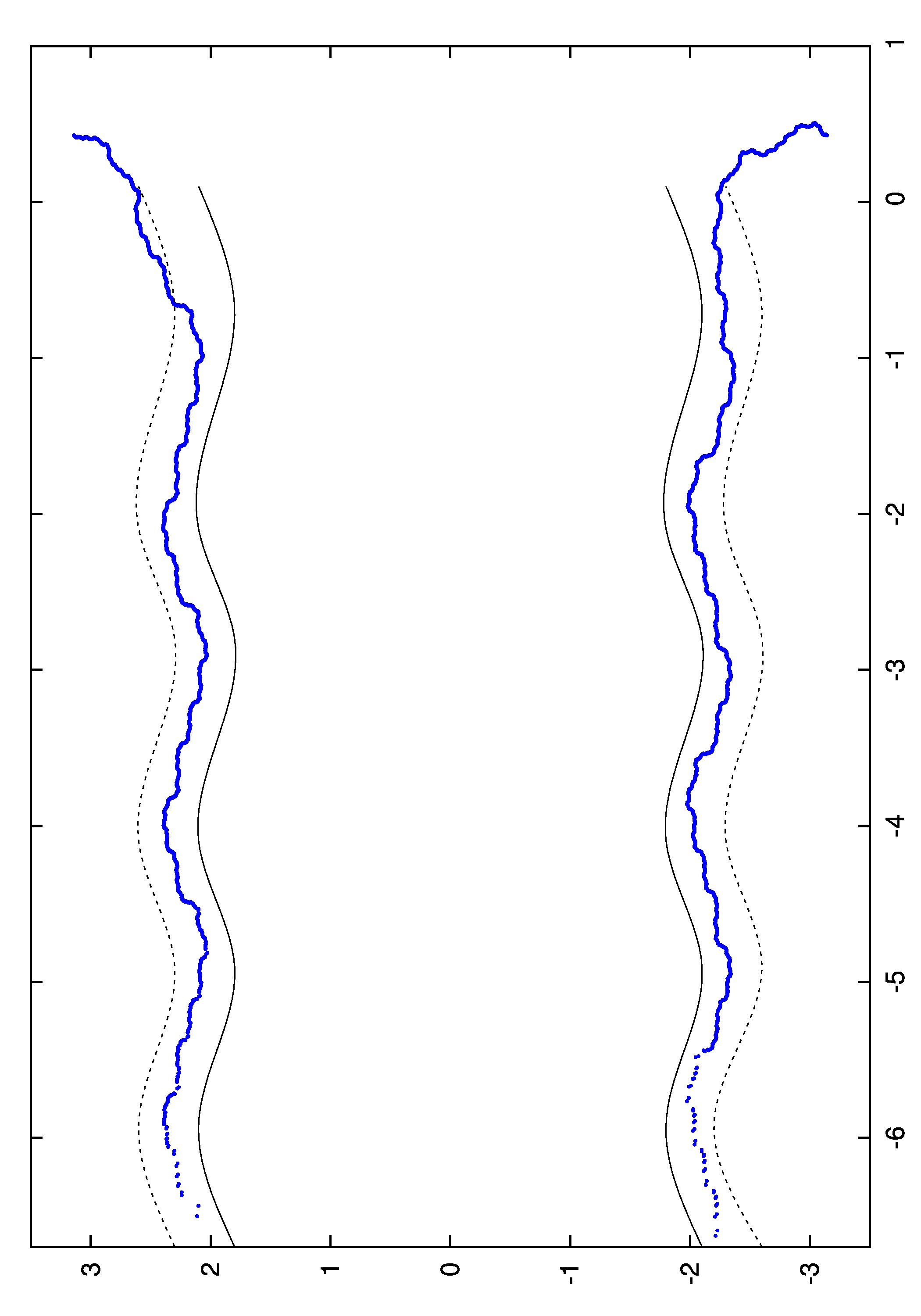}}
\vspace{3mm}       
\caption{The orbit of the critical point of the quadratic polynomial with the rotation number $[5,5,5, \ldots]$ in the neighborhood of the critical point in the logarithmic coordinate $\ln(z-1)$. The strip $\cT_{\min}$ is bounded by two solid lines, its height being $\gamma_{\min}$, the strip between the dashed lines is  $\cT_{\min}$, its height - $\gamma_{\max}$. The orbit is asymptotically contained between the lines.}
\label{fig:5}       
\end{figure}

We will now proceed with an estimate  on $\gamma_{{\max}}$.

Recall, that the modified Blaschke product $\tilde{Q}_\theta$ (see $(\ref{eq:modBP})$ and the following definition of $\tilde{Q}_\theta$) is a degree two map that preserves the circle $\field{T}$. The two preimages of $\field{T}$ under $\tilde{Q}_\theta$ are $\field{T}$ itself and a topological circle that lies in $\hat{\field{C}} \setminus \bar{\field{D}}$ and and touches $\field{T}$ at the critical point $1$ (``one-half of figure eight'', see Fig. $\ref{fig:4}$).  We will take  $z_1 \in \field{T}$ and $z_2$ in this other preimage of $\field{T}$ under the modified Blaschke product  with $\Im(z_1)>0$ and $\Im(z_2)>0$, both sufficiently close to $z_0=1$, and obtain a lower bound on the angle $\beta'$ between the vectors $f(z_1)-1$ and $f(z_2)-1$. An upper bound on the angle $\gamma$, then, is $\pi$ minus the lower bound on the angle $\beta'$. An upper bound on $\gamma$, in turn, produces a lower bound on $\lambda$ through formula $(\ref{maxbound})$.

Given a triple of points $z_0=1,z_1, z_2 \in \field{C}$,  let $\vartheta$ and $\beta$ be as in  the Quasiconformal Disstortion of Angles Theorem on page 9.

Elementary geometric considerations demonstrate
\begin{eqnarray}
\nonumber {|z_1-z_2| \over |z_1-1|+|z_2-1|}&=&{ \sqrt{ |z_1-1|^2+|z_2-1|^2 -2 |z_1-1||z_2-1| \cos{\pi \over 3} } \over  |z_1-1|+|z_2-1| }+O\left(\left(|z_1-1|+|z_2-1|\right)^2 \right) \\
\nonumber &=&{ \sqrt{ |z_1-1|^2+|z_2-1|^2 - |z_1-1||z_2-1| } \over  |z_1-1|+|z_2-1| }+O\left(\left(|z_1-1|+|z_2-1|\right)^2 \right) \\
\label{bound1} &=&{ \sqrt{ z^{-2}+z^2 -1 } \over  z^{-1}+z }+O\left(\left(|z_1-1|+|z_2-1|\right)^2 \right),
\end{eqnarray}
where 
$$z=\sqrt{|z_2-1| \over |z_1-1|}.$$
The minimum of the leading term in $(\ref{bound1})$ is achieved at $z=1$, and is equal to $1/2$; therefore, the lower bound on the inverse sine of $(\ref{bound1})$ is equal to 
$$\vartheta={\pi \over 6}+O\left(\left(|z_1-1|+|z_2-1|\right)^2 \right).$$
Next,
$$\sin{\vartheta \over 2}=\sin{{\pi \over 12}+O\left(\left(|z_1-1|+|z_2-1|\right)^2 \right)}={1 \over 4}\sqrt{2}(\sqrt{3}-1)+O\left(\left(|z_1-1|+|z_2-1|\right)^2 \right).$$

At the next step we will evaluate the function $\phi_M$ at the above value. For brevity of notation, denote
$$\zeta={\sqrt{2}(\sqrt{3}-1) \over 4}.$$
According to $(\ref{asympt_mu})$, 
$$ \mu \left(\zeta + O\left(\left(|z_1-1|+|z_2-1|\right)^2 \right) \right)   \le  -{1 \over 2 \pi} \ln \left( { \zeta \over 4} \right) + O\left(\left(|z_1-1|+|z_2-1|\right)^2 \right).
$$
We can now use the definition $(\ref{eq:phi_K})$ of $\phi_M$, together with $(\ref{asympt_mu_inv})$, to obtain
$$\phi_M\left(\zeta +O\left(\left(|z_1-1|+|z_2-1|\right)^2 \right) \right) \ge \left( 4 \left( \zeta \over 4 \right)^M +O \left( \left(  {\zeta \over 4} \right)^{2 M}   \right) \right) \left(1+O\left(M \left(|z_1-1|+|z_2-1|\right)^2 \right) \right),
$$
that is, according to the Quasiconformal Disstortion of Angles Theorem on page 9,
\begin{eqnarray}
\nonumber \beta &\ge &  2 \arcsin \left( \left( 4 \left( \zeta \over 4 \right)^M +O \left( \left(  {\zeta \over 4} \right)^{2 M}   \right) \right) \left(1+O\left(M \left(|z_1-1|+|z_2-1|\right)^2 \right) \right) \right) \implies \\
\nonumber  |f(z_1)-f(z_2)| & \ge&   \left( |f(z_1)-f(1)|+|f(z_2)-f(1)| \right)  \sin \left(  2 \arcsin \left( \left( 4 \left( \zeta \over 4 \right)^M +O \left( \left(  {\zeta \over 4} \right)^{2 M}   \right) \right) \right. \right.  \\
\nonumber &\phantom{a}& \phantom{aaaaaaaaaaaaaaaaaaaaaaaaaaaaaaaa} \times \left. \left. \left(1+O\left(M \left(|z_1-1|+|z_2-1|\right)^2 \right) \right) \right)  \right) \\
\nonumber  & \ge&   \left( |f(z_1)-1|+|f(z_2)-1| \right)  \left( 8 \left( {\zeta \over 4 } \right)^M + O\left(  \left( {\zeta \over 4 } \right)^{2 M}  \right)  \right)  \left(1+O\left(M \left(|z_1-1|+|z_2-1|\right)^2 \right) \right).
\end{eqnarray}
Recall that $\arccos$ is a decreasing function of its argument, therefore, when $|f(z_1)-1|=|f(z_2)-1|$ (the condition which enters the definitions of angles $\gamma_{\max}$ and $\gamma_{\min}$ in the beginning of Section $\ref{lower_bound}$), we obtain the following lower bound on the angle $\beta'$ between the vectors $f(z_1)-1$ and $f(z_2)-1$: 
\begin{eqnarray}
\nonumber &&\arccos \left({|f(z_1)-1|^2+|f(z_2)-1|^2-|f(z_1)-f(z_2)|^2  \over 2  |f(z_1)-1| |f(z_2)-1|    } \right)   \ge  \\
\nonumber &&\arccos \left({|f(z_1)-1|^2\!+\!|f(z_2)-1|^2 \over  2  |f(z_1)-1| |f(z_2)-1|  } -\right. \\
\nonumber &&\phantom{aaaaaaaaaa} -\left.{\left( |f(z_1)\!-\!1|\!+\!|f(z_2)\!-\!1| \right)^2  \left( 8 \left( {\zeta \over 4 } \right)^M \!+\! O\left(  \left( {\zeta \over 4 } \right)^{2 M}  \right)  \right)^2  \left(1\!+\!O\left(M \left(|z_1-1|+|z_2-1|\right)^2 \right) \right)^2 \over 2  |f(z_1)-1| |f(z_2)-1|    } \right)   \ge  \\
\nonumber &&\phantom{aaa} \arccos \left( {1 \over 2} \left( 2-  4    \left( 8 \left( {\zeta \over 4 } \right)^M + O\left(  \left( {\zeta \over 4 } \right)^{2 M}  \right)  \right)^2  \left(1+O\left(M \left(|z_1-1|+|z_2-1|\right)^2 \right) \right)^2 \right) \right) \ge  \\
\nonumber &&\phantom{aaa} \arccos \left( 1- \left( 128 \left( {\zeta \over 4 } \right)^{2 M} + O\left(  \left( {\zeta \over 4 } \right)^{3  M}  \right)  \right)   \left(1+O\left(M \left(|z_1-1|+|z_2-1|\right)^2 \right) \right)^2 \right).
\end{eqnarray}

We finally take the limit $z_i \rightarrow 1$, and get that
\begin{eqnarray}
\nonumber \gamma_{\max} &=& \pi-\beta'=\pi-\arccos \left( 1- \left( 128 \left( {\zeta \over 4 } \right)^{2 M} + O\left(  \left( {\zeta \over 4 } \right)^{3  M}  \right)  \right) \right)\\
 \label{gamma_max} &=&\pi-\sqrt{2} \sqrt{ 128 \left( {\zeta \over 4 } \right)^{2 M} + O\left(  \left( {\zeta \over 4 } \right)^{3  M}  \right)   }=\pi- 16 \left( {\zeta \over 4 } \right)^{M}\left(1 + O\left(  \left( {\zeta \over 4 } \right)^{ M}  \right) \right). 
\end{eqnarray}

\section{A bound on the quasisymmetric constant} \label{qc_constant}

In this section we will give an ouline of \'Swi\c{a}tek's proof of the quasisymmetric property of the conjugacy of the critical circle maps to the rotation from \cite{Sw}. The goal of our outline will be to recover a useful expression for the constant of quasisymmetry of the conjugacy only in terms of the upper bound on the integers in the continued fraction expansion of the rotation number. We will not give a detailed proof of the \'Swi\c{a}tek's theorem, our objective will be to extract the necessary dependency.

To obtain a bound on the quasisymmetric constant of the homeomorphism that conjugates a circle homeomorphism with a critical point to the rigid rotation, we will have to obtain several commensurability relations between the intervals  bounded by a point and its forward and backward closest returns. This will be done in Lemma $\ref{1.2}$. The proof of this Lemma uses the following fact (see, for example, \cite{Sw})
\begin{lemma}\label{f1.1}
Let $g: \field{R} \mapsto \field{R}$ be increasing and $1$-periodic. Choose $Q$ positive so that $\rho(g)=P/Q$ in simplest terms, if $\rho(g)$ is rational, or $Q=\infty$, if $\rho(g)$ is irrational. Then for every $x \in \field{R}$, and every pair of integers $p$ and $q$, $|q| < Q$,
$$(g^q(x)-x-p) (q \rho(g)-p)>0.$$
\end{lemma}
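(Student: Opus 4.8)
\textbf{Proof plan for Lemma \ref{f1.1}.}

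The statement to prove is: if $g:\field{R}\mapsto\field{R}$ is increasing with $g(x)-x$ being $1$-periodic, $\rho(g)=P/Q$ in lowest terms (or $Q=\infty$ for irrational rotation number), then for every $x$ and every pair of integers $p,q$ with $|q|<Q$ one has $(g^q(x)-x-p)(q\rho(g)-p)>0$. The plan is to reduce everything to positive $q$ (the case $q=0$ forces $p=0$ and both factors vanish, which is excluded since $|q|<Q$ and $q\rho(g)-p=0$ would make the product zero not positive, so we may assume $q\neq 0$; negative $q$ follows from positive $q$ applied to $g^{-1}$, whose rotation number is $-\rho(g)$, using the substitution $q\mapsto -q$, $p\mapsto -p$). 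So assume $q>0$, $q<Q$.

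First I would establish the elementary monotonicity fact that $g^q(x)-x-p$ does not change sign as $x$ varies: indeed $x\mapsto g^q(x)-x$ is continuous and $1$-periodic (since $g^q(x+1)=g^q(x)+1$ by induction from the commutation relation $g(x+1)=g(x)+1$), hence if it took the value $p$ somewhere it would take it on a whole fiber; but then $g^q$ would have a periodic point of period dividing $q$ with rotation vector $p/q$, forcing $\rho(g)=p/q$ in lowest terms and hence $q\geq Q$, contradicting $q<Q$. Therefore $g^q(x)-x-p$ is either everywhere positive or everywhere negative, and its sign depends only on $p$ (for fixed $q$); call this common sign $\epsilon(p)\in\{+1,-1\}$. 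The key point is then to show $\epsilon(p)=\mathrm{sign}(q\rho(g)-p)$.

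To pin down the sign, I would use the standard fact that $\lim_{n\to\infty}\frac1n(g^{n}(x)-x)=\rho(g)$ uniformly in $x$ (translation number), together with the following comparison: since $g^q(y)-y-p$ has constant sign $\epsilon(p)$, iterating gives, by a telescoping/subadditivity argument, that $g^{kq}(x)-x-kp$ also has sign $\epsilon(p)$ for every $k\geq 1$ — more precisely, writing $y_j=g^{jq}(x)$ we get $y_{j+1}-y_j-p$ has sign $\epsilon(p)$, so summing over $j=0,\dots,k-1$ yields $g^{kq}(x)-x-kp$ with the same sign. Dividing by $k$ and letting $k\to\infty$ gives $q\rho(g)-p$ having sign $\epsilon(p)$ (or being zero); but it cannot be zero since $q\rho(g)=p$ with $0<q<Q$ would contradict $\rho(g)=P/Q$ in lowest terms. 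Hence $\mathrm{sign}(q\rho(g)-p)=\epsilon(p)=\mathrm{sign}(g^q(x)-x-p)$, which is exactly the claimed inequality. The irrational case $Q=\infty$ is covered by the same argument: $g^q(x)-x=p$ on a fiber would make $\rho(g)=p/q$ rational.

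\textbf{Main obstacle.} The only delicate point is the clean statement that $x\mapsto g^{q}(x)-x$ never attains an integer value when $0<q<Q$; everything else is bookkeeping with the commutation relation $g(x+1)=g(x)+1$ and the existence of the translation number. I would phrase that point as: if $g^{q}(x_0)=x_0+p$ for some $x_0$, then the orbit $\{g^{jq}(x_0)=x_0+jp\}$ is a genuine periodic orbit of $g$ modulo $1$ of period $q$ (in $\field{R}/\field{Z}$) with rotation number $p/q$, and a standard lemma forces $\rho(g)=p/q$ and that $q$ is a multiple of $Q$, so $q\geq Q$ — contradiction. This is where care is needed to invoke the right form of the classical rotation-number theory (uniform convergence of $(g^n(x)-x)/n$, constancy of sign of $g^q-x-p$), but none of it requires the circle map to be injective or have any regularity beyond monotonicity.
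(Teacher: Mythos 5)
The paper does not prove this lemma itself --- it is quoted from \'Swi\c{a}tek \cite{Sw} as a standard fact --- so there is no in-paper argument to compare against. Your proof is the classical one and its core is correct: $x\mapsto g^q(x)-x$ is continuous and $1$-periodic; a zero of $g^q(x)-x-p$ would give a periodic orbit of the induced circle map with rotation number $p/q$, so $\rho(g)=p/q$ and its lowest-terms denominator divides $q<Q$, contradicting $\rho(g)=P/Q$ in lowest terms (or contradicting irrationality of $\rho(g)$); hence $g^q(x)-x-p$ has constant sign, and the telescoping $g^{kq}(x)-x-kp=\sum_{j=0}^{k-1}\bigl(g^{(j+1)q}(x)-g^{jq}(x)-p\bigr)$ together with $(g^{kq}(x)-x)/k\to q\rho(g)$ identifies that sign with the sign of $q\rho(g)-p$, which is nonzero by the same periodic-point obstruction.

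Two small slips, neither of which disturbs the main argument. In the reduction to $q>0$, the substitution should be $q\mapsto -q$ with $p$ \emph{unchanged}, not $p\mapsto -p$: setting $h=g^{-1}$ and $q'=-q>0$, one has $h^{q'}(x)=g^{q}(x)$ and $q'\rho(h)-p=(-q)(-\rho(g))-p=q\rho(g)-p$, so the product for $(g,q,p)$ is literally the product for $(h,q',p)$ with the same $p$; negating $p$ as you propose would instead produce the statement for $(g,q,-p)$. Secondly, the remark that ``$q=0$ forces $p=0$'' is not accurate: for $q=0$ and $p\neq 0$ the product equals $p^2>0$ and the lemma already holds, so the only pair excluded by the strict inequality is $(q,p)=(0,0)$.
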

We will now adapt the proof of the commensurability property \cite{Sw} with the objective of obtaining an expression for the commensurability constant. 
\begin{lemma} (Lemma 1.2. from \cite{Sw}) \label{1.2}
Suppose that $f: \field{R} \mapsto \field{R}$ is a lifting of a degree $1$ circle homeomorphism with an irrational rotation number $\rho(f)$. Assume that $f$ satisfies a cross-ratio inequality with respect to the cross-ratio modulus $(\ref{qmodulus})$ with bound $Q$. Let $p_n / q_n$ be a convergent of $\rho(f)$. Then there exists $K_1$,
\begin{equation}\label{eq:K1}
K_1=2 \left(1-\left(\mu^{-1}\left( {1 \over 8 \pi} Q^{-6} \ln {5 \over 3}   \right)   \right)^2    \right)^{-6},
\end{equation}
so that for every $x \in \field{R}$
$$ K_1^{-1} |f^{q_n}(x)-p_n-x| \le |f^{-q_n}(x)+p_n-x| \le K_1 |f^{q_n}(x)-p_n-x|.$$
\end{lemma}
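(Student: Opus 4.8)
The plan is to follow \'Swi\c{a}tek's argument for the commensurability of forward and backward closest returns, while tracking the constants. The central tool is the cross-ratio inequality with bound $Q$ for $f$, together with Lemma~\ref{1.1}, which upgrades this to a bound $Q^{2k}$ for configurations with intersection number $k$. First I would fix a convergent $p_n/q_n$ of $\rho(f)$ and, using Lemma~\ref{f1.1}, observe that the three points $x$, $f^{q_n}(x)$, $f^{-q_n}(x)$ are arranged on $\field{R}$ so that $x$ lies strictly between its two $q_n$-th iterates (the signs of $q_n\rho(f)-p_n$ for the $\pm q_n$ translates are opposite). This puts us in the setting where we want to compare the two adjacent intervals $I_+ = [x, f^{q_n}(x)-p_n]$ and $I_- = [f^{-q_n}(x)+p_n, x]$ (up to the obvious translation bookkeeping by $p_n$).

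The key step is to build an \emph{allowable configuration} of quadruples whose images under iteration of $f$ — specifically under $f^{q_n}$ — relate the cross-ratios of a model configuration near $x$ to those near the iterate, in a way that isolates the ratio $|I_-|/|I_+|$. Concretely, one takes quadruples of the form $(f^{-q_n}(x)+p_n,\, ?,\, x,\, f^{q_n}(x)-p_n)$ and their images, chosen so that iterating $f$ moves this quadruple around the circle; because $q_n < q_{n+1}$, the relevant orbit pieces are pairwise disjoint modulo $1$ for a controlled number of steps, so the intersection number stays bounded (by an absolute constant, say $\le 3$), and Lemma~\ref{1.1} applies with the exponent $2k \le 6$. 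Running the cross-ratio inequality forward gives one side of the estimate; running it for $f^{-1}$ (which also satisfies a cross-ratio inequality with the same bound $Q$, since the cross-ratio modulus is symmetric) gives the other. The outcome is a lower bound of the form $\mathbf{Cr}(\cdot) \ge c_0$ for some explicit $c_0 = c_0(Q)$ on the relevant quadruple, which via $(\ref{chi_const})$ and the Teichm\"uller Modulus Theorem translates into $\mathbf{Cr} \ge \tfrac14$ after at most six applications of the $Q$-inequality — this is precisely where the $Q^{-6}$ and the $\ln(5/3)$ in $(\ref{eq:K1})$ enter: one needs $\tfrac{1}{8\pi}Q^{-6}\ln\tfrac53 \le \mu$ of the slit disk so that inverting $\mu$ yields the bound on the ratio of the two intervals, and the sixth power on $(1-(\mu^{-1}(\cdots))^2)^{-1}$ is the accumulated cost of the six cross-ratio steps.

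The main obstacle I anticipate is the combinatorial bookkeeping needed to verify that the configuration one writes down is genuinely allowable (the intervals $[a_i, d_i)$ pairwise disjoint mod $1$ with $d_i - a_i < 1$) and has intersection number at most $3$; this requires the standard facts about the dynamical partition generated by $q_n$ and $q_{n+1}$ closest returns — that consecutive closest-return intervals have bounded overlap combinatorics independent of $n$ — and care that the translation constants $p_n$ are handled consistently so the periodicity of $f(x)-x$ is used correctly. Once the configuration is fixed, the rest is: apply Lemma~\ref{1.1} to get the factor $Q^{6}$; apply $(\ref{chi_const})$ to convert the modulus lower bound $\tfrac1\pi\ln\tfrac53$ into a cross-ratio statement; apply Teichm\"uller's Modulus Theorem and $(\ref{cond2})$ to pass from cross-ratios back to interval ratios, extracting $\mu^{-1}$; and finally assemble the constant $K_1 = 2\bigl(1 - (\mu^{-1}(\tfrac{1}{8\pi}Q^{-6}\ln\tfrac53))^2\bigr)^{-6}$, with the outer $2$ absorbing the passage from the squared (or doubled) quantities appearing in Rengel-type area comparisons to the linear interval ratio, and then symmetrize in $f \leftrightarrow f^{-1}$ to obtain the two-sided bound $K_1^{-1}|f^{q_n}(x)-p_n-x| \le |f^{-q_n}(x)+p_n-x| \le K_1|f^{q_n}(x)-p_n-x|$ for all $x$.
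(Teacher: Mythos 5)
Your high-level plan identifies the right ingredients — the cross-ratio inequality with bound $Q$, Lemma~\ref{1.1} with intersection number at most $3$ producing the factor $Q^{6}$, and the conversion chain through $(\ref{chi_const})$, Teichm\"uller's theorem and $(\ref{cond2})$ that yields the $\mu^{-1}$ expression — but there is a genuine gap at exactly the place you flag as the ``main obstacle.'' The paper does \emph{not} run the cross-ratio inequality along an orbit of $f$ itself. Instead it introduces the family $F_s = f - s$, $s \ge 0$, and picks the unique $s^*$ for which $F^{n q_n}_{s^*}(x) - n p_n = F^{-q'}_{s^*}(x) + p'$ (here $p'/q'$ is the adjacent convergent on the other side of $\rho(f)$). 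This forces $F = F_{s^*}$ to have the \emph{rational} rotation number $(n p_n + p')/(n q_n + q')$, so that $x$ lies on a genuine periodic orbit of $F$ of known length, $\pi(x)$ and $\pi(F^{q_n}(x))$ are consecutive on the circle, and one can speak of ``the smallest $l$ with $f^l(I_2)$ the shortest arc in $\cI$'' — which is what makes the intersection-number-$\le 3$ claim and the $\mathbf{Cr} \ge 1/4$ normalization clean. Without this auxiliary periodic-orbit construction, your quadruple $(f^{-q_n}(x)+p_n,\,?\,,x,\,f^{q_n}(x)-p_n)$ has no canonical finite orbit to iterate along, and the combinatorial bookkeeping you correctly worry about does not close. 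The monotonicity $(\ref{ineq1})$--$(\ref{ineq2})$ then transfers the resulting commensurability of $F$-orbit gaps back to $f$.

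A secondary inaccuracy: you attribute the outer exponent $-6$ in $K_1 = 2\bigl(1-(\mu^{-1}(\cdots))^2\bigr)^{-6}$ to ``the accumulated cost of six cross-ratio steps,'' but that is not how it arises. The single cross-ratio comparison, after the $Q^{6}$ from Lemma~\ref{1.1}, produces $|I_1|/|I_2| \ge D$ with $D = \bigl(1-(\mu^{-1}(\frac{1}{8\pi}Q^{-6}\ln\frac53))^2\bigr)^{2}$ — the inner exponent $2$ coming from the $\sqrt{\cdot}$ in $(\ref{cond2})$. The factor $-6 = 2 \times 3$ then arises because one must chain this ratio across the \emph{four} consecutive $F$-orbit intervals with endpoints $F^{-2q_n}(x)+2p_n,\,F^{-q_n}(x)+p_n,\,x,\,F^{q_n}(x)-p_n,\,F^{2q_n}(x)-2p_n$, i.e.\ three successive comparisons, giving $D^{-3}$ and hence $K_1 = 2D^{-3}$. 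So the $Q^{-6}$ and the outer $-6$ have independent origins (intersection number $3$ via Lemma~\ref{1.1}, versus three adjacent-interval comparisons times the square-root), not a common one.
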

\begin{proof}
Assume that $p_n / q_n < \rho(f)$. Otherwise we can consider $F(x)=-f(-x)+1$: the same estimate will follow from a similar argument for $F$ and $-x$ in place of $f$ and $x$, respectively.

Denote $p'/q'$ the next fraction larger than $p_n/q_n$ from the sequence of the best rational approximants with $q' \le q_n$. Such $p'/q'$ is larger than $\rho(f)$, otherwise
$$0<\rho(f)-{p' \over q'} < \rho(f)-{p_n \over q_n} \implies |q' \rho(f)-p'| < |q_n \rho(f)-p_n|$$
which contradicts the fact that $q_n$ is the closest return time for times smaller or equal to $q_n$.

Fix a point $x \in \field{R}$. From Lemma $\ref{f1.1}$, $f^{q_n}(x)-p_n>x$ and $f^{q'}(x)-p'<x$, therefore, $f^{-q'}(x)+p'>x$. Choose $n \ge 2$ such that $f^{-q'}(x)+p'    \in (f^{(n-1)q_n}(x)-(n-1) p_n, f^{n q_n}(x)-n p_n)$. 

Next, consider $F_s=f-s$ where $s$ is a non-negative number. As $s$ increases, points in the forward orbit of $x$ shift to the left, those in the backward orbit - to the right. Hence, there is a unique value $s^*$ of $s$ for which 
\begin{equation}\label{periodF}
F^{n q_n}_{s^*}(x)-n p_n=F^{-q'}_{s^*}(x)+p'.
\end{equation}
For simplicity, we will write $F$ in place of $F_{s^*}$. It follows from $(\ref{periodF})$, that such $F$ has the rotation number equal to
$$\rho'={n p_n + p' \over n q_n +q'}.$$

We have the following ordering of points (see \cite{Sw} for details)
\begin{eqnarray}
\label{ineq1} 0<F^{q_n}(x)-p_n<f^{q_n}(x)&-&p_n<F^{2 q_n}(x)-2 p_n,\\
\label{ineq2} 0>F^{-q_n}(x)+p_n>f^{-q_n}(x)&+&p_n>F^{-2 q_n}(x)+2 p_n.
\end{eqnarray}

Consider the exponential projection of the real line on the circle: $\pi(x)=e^{2 \pi i x}$. The periodic orbit $\pi(F^i(x))$ consists of $n q_n+q'$ points, while the points $\pi(x)$ and $\pi(F^{q_n}(x))$ are consecutive on the circle (again, see \cite{Sw} for details). 

Denote $\cI$ the collection of arcs on the circle with endpoints at two consecutive points of the orbit of $x$ by the projection of $F$. If $I_1, I_2 \in \cI$ are adjacent, and $F$ satisfies a cross-ratio inequality with respect to the cross-ratio modulus $(\ref{qmodulus})$ with bound $Q$, then 
\begin{equation}\label{Iratio}
{|I_1| \over |I_2|} \ge   \left(1-\left(\mu^{-1}\left( {1 \over 8 \pi} Q^{-6} \ln {5 \over 3}   \right)   \right)^2    \right)^2.
\end{equation}

This can be seen from the following argument. Choose $I_3 \in \cI$ adjacent to $I_2$ on the opposite side to  $I_1$. Lift $I_1, I_2, I_3$ to the line to get some intervals $(a,b), (b,c), (c,d)$ respectively. Observe that $|I_1| / |I_2| > {\bf Cr}(a,b,c,d)$. Choose the smallest $l$ such that $f^l(I_2)$ is the shortest arc in $\cI$. The configuration $(a,b,c,d),$ $\ldots,$ $(f^l(a),f^l(b),f^l(c),f^l(d))$ has the  intersection number at most $3$ (see \cite{Sw}). Also
$${\bf Cr} (f^l(a),f^l(b),f^l(c),f^l(d))=\!{|f^l(I_1)| \cdot |f^l(I_3)|  \over (|f^l(I_1)|\!+\!|f^l(I_2)|) \cdot (|f^l(I_3)|\!+\!|f^l(I_2)|)  }=\!
{1 \cdot 1  \over \left(1\!+\!{|f^l(I_2)|  \over |f^l(I_1)| } \right) \cdot  \left(1\!+\! {|f^l(I_2)|  \over |f^l(I_3)| } \right) } \!\ge {1 \over 4}.$$
Therefore, by  $(\ref{chi_const})$ and by Lemma $\ref{1.1}$,
\begin{equation}\label{rel1}
 Q^6 \chi (a,b,c,d) \ge \chi (f^l(a),f^l(b),f^l(c),f^l(d)) \ge { 1 \over \pi} \ln {5 \over 3}.
\end{equation}

Finally, recall the condition $(\ref{cond2})$:
$${|I_1| \over |I_2|} > {\bf Cr}(a,b,c,d) \implies 8 \mu\left(\sqrt{1- \sqrt{|I_1| \over |I_2|}  }   \right) > \chi(a,b,c,d).$$

Putting this together with $(\ref{rel1})$, we obtain 
\begin{equation}\label{eq:D}
 \mu\left(\sqrt{1- \sqrt{|I_1| \over |I_2|}  }   \right) \ge  {1 \over 8 \pi} Q^{-6} \ln {5 \over 3} \implies {|I_1| \over |I_2|} \ge \left(1-\left(\mu^{-1}\left( {1 \over 8 \pi} Q^{-6} \ln {5 \over 3}   \right)   \right)^2    \right)^2 \equiv D,
\end{equation}
which is just $(\ref{Iratio})$. It follows immediately, that the four intervals with the limiting points
$$F^{-2 q_n}(x)+2 p_n, F^{-q_n}(x)+p_n, x, F^{q_n}(x)-p_n, F^{2 q_n}(x)-2 p_n$$
have lengths comparable with the factor $ D^{-3}$. Together with the inequalities $(\ref{ineq1})$ and $(\ref{ineq2})$, the Lemma follows with 
$$K_1=2 D^{-3}.$$

\end{proof} \qed

We will now outline the proof of the quasisymmetric property of the conjugacy. 

\begin{proposition} (Prop. 1 in \cite{Sw}) \label{prop1}
Let $f: \field{R} \mapsto \field{R}$ be a lifting of degree $1$ circle homeomorphism with an irrational rotation number $\rho(f)$. Suppose that  $f$ satisfies a cross-ratio inequality with bound $Q$.

Then, there exist a lift of degree $1$ circle homeomorphism $h: \fR \mapsto \fR$, which conjugates $f$ to a translation by $\rho(f)$:
$$f(h(x))=h(x+\rho(f)).$$

Furthermore, if $\rho(f)$ is of bounded type, $a_n \le B$ for all $n$, then $h$ is quasisymmetric with the quasisymmetric constant
\begin{equation}\label{Kconst}
K_2=\max\left\{2, K_1^{B+1} \right\},
\end{equation}
where $K_1$ is as in $(\ref{eq:K1})$.

\end{proposition}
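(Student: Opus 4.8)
The plan is to follow \'Swi\c{a}tek's original argument but to track the constants explicitly, splitting the proof into two regimes according to whether the combinatorics of the orbit near a point forces a ``short'' estimate or a ``long'' one. First I would fix a point $x\in\field{R}$ and consider the four intervals determined by its consecutive forward and backward closest returns $f^{q_n}(x)-p_n$, $f^{-q_n}(x)+p_n$, for the levels $n$ that are relevant to the rotation number $\rho(f)$. By Lemma~\ref{1.2} these closest returns are $K_1$-commensurable on both sides of $x$, where $K_1$ is the constant in $(\ref{eq:K1})$. Since $\rho(f)$ is of bounded type with $\sup a_i \le B$, passing from level $n$ to level $n-1$ involves composing at most $B$ of the elementary commensurability steps (each a single closest return), so the intervals at level $n-1$ are commensurable with those at level $n$ with a factor that is a power of $K_1$ no larger than $K_1^{B}$. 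Chaining this with the two-sided estimate from Lemma~\ref{1.2} produces a uniform comparison constant of the shape $K_1^{B+1}$, which already explains the exponent $B+1$ in $(\ref{Kconst})$.

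Next I would turn the interval-comparison estimates into the quasisymmetry bound for $h$ itself. Recall $h$ conjugates $f$ to the translation by $\rho(f)$, so $h$ carries the partition of $\field{R}$ by the orbit of $0$ under the translation (whose combinatorics is governed purely by the continued fraction of $\rho(f)$) to the partition by the orbit of $h(0)$ under $f$. Given $x$ and $\delta>0$, I would locate $x$, $x\pm\delta$ inside this dynamical partition at the appropriate depth, express the ratio $(h(x+\delta)-h(x))/(h(x)-h(x-\delta))$ as a bounded product (with a bounded number of factors, the bound depending only on $B$) of ratios of lengths of adjacent dynamical intervals, and apply the commensurability estimate from the previous paragraph to each factor. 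The rigid-rotation side contributes only universal combinatorial constants, so the whole product is bounded by a power of $K_1^{B+1}$; after absorbing the universal factors one obtains $K_2 = \max\{2, K_1^{B+1}\}$, the $\max$ with $2$ merely ensuring $K_2\ge 2$ for the later application of the Ahlfors--Beurling bound $(\ref{Kbound})$.

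The existence and uniqueness of the conjugating homeomorphism $h$ I would handle first, before the quantitative part: since $f$ satisfies a cross-ratio inequality and $\rho(f)$ is irrational, the commensurability of all the dynamical intervals (Lemma~\ref{1.2} together with the chaining above) shows that these intervals shrink geometrically, so the combinatorial conjugacy to the rotation extends continuously to a homeomorphism, and monotonicity gives uniqueness up to the normalization; this is the classical Denjoy-type argument and I would only sketch it.

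The main obstacle I anticipate is the bookkeeping in the passage between consecutive renormalization levels: one has to be careful that ``composing at most $B$ elementary steps'' really does cost only a factor $K_1^{B}$ and not something that degrades with $n$, and that the number of dynamical intervals one must chain across when comparing $(h(x+\delta)-h(x))$ with $(h(x)-h(x-\delta))$ is genuinely bounded in terms of $B$ alone and not in terms of $\delta$ or the depth $n$. This requires the standard fact that for a bounded-type rotation number the ``scaling'' between the two sides of any point is controlled at bounded combinatorial distance, which in turn rests on the ordering relations $(\ref{ineq1})$--$(\ref{ineq2})$ and on the closest-return structure; I would invoke \cite{Sw} for these purely combinatorial inputs and concentrate the write-up on verifying that the resulting constant is exactly $(\ref{Kconst})$.
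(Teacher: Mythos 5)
Your overall approach is essentially the same as the paper's: fix $x$, locate $x \pm t$ relative to the closest-return structure of the orbit, and chain the commensurability estimate from Lemma~\ref{1.2} across at most $B+1$ consecutive returns to extract a quasisymmetric constant of the shape $K_1^{B+1}$. The paper implements the chaining slightly differently: instead of composing ``at most $B$ elementary steps per level,'' it derives from $a_n|q_n\rho(f)-p_n| \le |q_{n-1}\rho(f)-p_{n-1}| \le (a_n+1)|q_n\rho(f)-p_n|$ and Lemma~\ref{f1.1} the inequality $(\ref{rel8})$, which lets one replace the $n$-th closest return by $(a_{n+1}+1) \le B+1$ iterates of the $(n+1)$-st, and then uses the explicit containment relations $(\ref{containments})$ to squeeze the ratio $|h(x+t)-h(x)|/|h(x)-h(x-t)|$ between quantities controlled by Lemma~\ref{1.2}. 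The arithmetic input ($\sup a_i \le B$) and the exponent $B+1$ come out the same; the paper's route via $(\ref{rel8})$ is a little cleaner than the per-level decomposition you sketch, but these are cosmetic differences.

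The genuine gap is your explanation of the $\max\{2, K_1^{B+1}\}$. You attribute the $2$ to ``ensuring $K_2 \ge 2$ for the later application of the Ahlfors--Beurling bound $(\ref{Kbound})$,'' but that bound $M \le 2K-1$ imposes no lower bound on $K$, and the $2$ has nothing to do with it. It comes from the large-scale regime $t > 1$, which your argument does not treat at all: the dynamical-partition analysis via closest returns is valid only for $0 < t \le 1$. For $t > 1$, with $m$ the integer part of $t$, the normalization $h(x+m) = h(x)+m$ (since $h$ is a lift of a degree $1$ circle homeomorphism) yields
\begin{equation*}
{m \over m+1} \ \le\ {|h(x+t)-h(x)| \over |h(x)-h(x-t)|}\ \le\ {m+1 \over m},
\end{equation*}
and the worst case $m=1$ gives the constant $2$. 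Without this case your argument only establishes quasisymmetry at scales below $1$ and does not recover the constant $(\ref{Kconst})$. Adding the $t>1$ case completes the proof along the paper's lines.
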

\begin{proof}
By assumption $\rho(f)$ is of bounded type: there exists a positive integer $B$ such that  $\rho(f)=[a_1,a_2,\ldots]$, $\sup_{i\ge 1}a_i \le B$. Recall that
$$a_n|q_n \rho(f)-p_n| \le |q_{n-1} \rho(f) -p_{n-1}| \le (a_n+1) | q_n \rho(f) - p_n|$$
for all $n \ge 1$ with the convention $p_0 / q_0 =1/0$.  Together with Lemma $\ref{f1.1}$, this leads to
\begin{equation}\label{rel8} 
|f^{\epsilon (a_n+1) q_n}(x)-\epsilon (a_n+1) p_n -x| > |f^{-\epsilon q_{n-1}}(x)+\epsilon p_{n-1}-x|
\end{equation}
for all $n \ge 1$, $x \in \fR$ and $\epsilon \in \{1,-1\}$,

Take $1 \ge t>0$, and choose $n \ge 0$ so that 
$$|q_{n+1} \rho(f)-p_{n+1}| < t \le |q_{n} \rho(f)-p_{n}|,$$
then
\begin{equation}\label{containments}
h(x+t) \!\in\! \left(f^{-\epsilon q_{n+1}}(h(x))\!+\!\epsilon p_{n+1}, f^{\epsilon q_n}(h(x)) \! - \! \epsilon p_n \right], \  h(x-t) \! \in \! \left[f^{-\epsilon q_{n}}(h(x))\! +\! \epsilon p_{n}, f^{\epsilon q_{n+1}}(h(x)) \! -\! \epsilon p_{n+1} \right)
\end{equation}
where $\epsilon=(-1)^n$. We can now use relation $(\ref{rel8})$ in the following computation
\begin{eqnarray}
\nonumber { |f^{-\epsilon q_{n+1}}(h(x))+\epsilon p_{n+1}-h(x)|     \over  |f^{-\epsilon q_{n}}(h(x))+ \epsilon p_{n}-h(x)|    } \le &{| h(x+t)-h(x)| \over |h(x)-h(x-t)|}& \le { |f^{\epsilon q_{n}}(h(x))-\epsilon p_{n}-h(x)|     \over  |f^{\epsilon q_{n+1}}(h(x))-\epsilon p_{n+1}-h(x)|    } \\
\nonumber { |f^{-\epsilon q_{n+1}}(h(x))+\epsilon p_{n+1}-h(x)|     \over  |f^{\epsilon (a_{n+1}+1) q_{n+1}}(h(x))-(a_{n+1}+1) \epsilon p_{n+1}-h(x)|    } \le &{| h(x+t)-h(x)| \over |h(x)-h(x-t)|}& \le { |f^{-\epsilon (a_{n+1}+1) q_{n+1}}(h(x))+(a_{n+1}+1)\epsilon p_{n+1}-h(x)|     \over  |f^{\epsilon q_{n+1}}(h(x))-\epsilon p_{n+1}-h(x)|    }. 
\end{eqnarray}
By Lemma $\ref{1.2}$ any two adjacent intervals with the end points $f^{k q_{n+1}}(h(x))-k p_{n+1}$ and $f^{(k+1) q_{n+1}}(h(x))-(k+1) p_{n+1}$ for $k \in \fZ$ are comparable with the constant $K_1$. Therefore, 
$$K_1^{-B-1} \le {|h(x+t)-h(x) | \over |h(x)-h(x-t)|} \le K_1^{B+1}$$
for $0 \le t \le 1$.

Suppose $t>1$, and let $m$ be its integer part, then since $h(x+m)=h(x)+m$, we have
$${m \over m+1} \le {|h(x+t)-h(x) | \over |h(x)-h(x-t)|} \le {m+1 \over m}.$$

Therefore, $h$ is $\max\left\{2, K_1^{B+1} \right\}$-quasisymmetric.

\end{proof} \qed

We can now finish the proof of the Main Theorem. It follows from the Blaschke model for the Siegel disk, that 
$$\qp=f \circ m \circ h^{-1} \circ \rot \circ h \circ  m^{-1} \circ f^{-1}$$ 
on $\partial \sd$.  Recall, that according to the theorem on page 9 entire quasiconformal maps are quasisymmetric. We, therefore, have that the map $g=f \circ m \circ h^{-1}$, $m$ being the M\"obius transformation $(\ref{mbt})$, satisfies
$${ |g(x+\delta)-g(x)| \over |g(x)-g(x-\delta)| } \le \eta\left( {  |m(h^{-1}(x+\delta))-m(h^{-1}(x))| \over |m(h^{-1}(x))-m(h^{-1}(x-\delta))| }  \right) \le \lambda(M)^{2 M} (C(\delta) K_2)^M,$$
where $\lambda(M)$ has been defined in $(\ref{def:circular_distortion})$, $M=2K_2-1$, $K_2$ is as in $(\ref{Kconst})$ and $C(\delta) \rightarrow 1$ as $\delta\rightarrow 0$.

Now, let $\theta=[a_1,a_2, \ldots]$ be a quadratic irrational, such that for all $i$ larger than some $N \ge 1$, we have $a_i=a_{i+s}$.  Suppose that $\alpha$, as in $(\ref{quad_ir})$, is larger than $1/2$ and the period is even.  Then, $[\qp^{q_{n+s}}(1),1] \subset [1,\qp^{q_{n}}(1)]$ and $[\rot^{q_{n+s}}(1),1] \subset [1,\rot^{q_{n}}(1)]$, while the restriction of the conjugacy $g$ to $[1,\rot^{q_{n}}(1)]$ is  $\lambda(M)^{2 M} (\tilde{C}(n) K_2)^M$-quasisymmetric with $\lim_{n \rightarrow \infty} \tilde{C}(n)=1$. Now, the bound $(\ref{b2.1})$  follows immediately from $3)$ of Lemma $\ref{SRbounds}$.

Next, suppose the period is odd, then ${[\qp^{q_{n+s}}(1),1] \nsubseteq [1,\qp^{q_{n}}(1)]}$, rather,  one can consider the intervals
$${[1,\qp^{q_{n+2s}}(1)] \subset [1,\qp^{q_{n}}(1)]}.$$ 
We recall, that $|\rot^{q_{n+2s}}(1)-1|=\alpha^2 |\rot^{q_{n}}(1)-1|$, then, if $\alpha>1/\sqrt{2}$, we obtain from  $3)$ of Lemma $\ref{SRbounds}$ that
$${|\qp^{q_{n+2 s}}(1)-1| \over |\qp^{q_{n}}(1)-1|} \le {\hat{C}(n) \over K^{-1} \left(1 \over 1+K  \right)^{\log_2{\alpha^2 \over 1-\alpha^2}+1}+1}, \quad \lim_{n \rightarrow \infty} \hat{C}(n)=1$$
$K$ being as in the Main Theorem, and since, as $n \rightarrow \infty$, 
\begin{equation}\label{square}
{{|\qp^{q_{n+2 s}}(1)-1| \over |\qp^{q_{n}}(1)-1|}  \over \left({|\qp^{q_{n+s}}(1)-1| \over |\qp^{q_{n}}(1)-1|}\right)^2} \rightarrow 1,
\end{equation}
the bound $(\ref{b1.1})$ follows. If the period $s$ is even, and $\alpha \le 1/2$, then the bound $(\ref{b2.2})$ follows from $1)$ of Lemma $\ref{SRbounds}$. The bound $(\ref{b1.2})$ follows the same result $1)$ of Lemma $\ref{SRbounds}$, and $(\ref{square})$ above.

The value of $Q=8$ (see Section $\ref{cr_ineq}$) has been used in the expression $(\ref{eq:D})$, and leads to the factor $8^{-7}$ in the argument of the inverse modulus of the Gr\"otzsch's extremal domain in  $(\ref{eq:Ks})$.

Notice, that we are not using $2)$ of Lemma $\ref{SRbounds}$ in case of odd $s$. In this case the intervals  $[\qp^{q_{n+s}}(1),1]$ and  $[1,\qp^{q_{n}}(1)]$ share a single boundary point, and it would seem natural to apply $2)$ of Lemma $\ref{SRbounds}$. However, an expression $1 / \left( (1+K^{-1})^{\left[s \log_2 \nu \right]}-1  \right)$ can not be bounded from above by an expression of the form $\delta_2^s$ for some $\delta_2<1$, indeed,
$${1  \over   (1+K^{-1})^{\left[s \log_2 \nu \right]}-1} \le {1  \over   1+\left[s \log_2 \nu \right] K^{-1}-1}   \le {K \over \left[s \log_2 \nu \right]}.$$
We, nonetheless, chose to keep case $2)$ in Lemma $\ref{SRbounds}$ for completeness.

\section{Cross-ratio inequality} \label{cr_ineq}

In this Section we will recall  \'Swi\c{a}tek's proof of the fact that the cross-ratio modulus $(\ref{qmodulus})$ satisfies the cross-ratio inequality with a definite bound. Our goal will be to adapt the proof to the particular case of the Blaschke product, and to show that in this case the bound can be chosen to be ``absolute'' - independent of the rotation number, as long as it is a quadratic irrational.

Let $g(z)= {1 \over 2 \pi i} \ln(\tilde{Q}_\theta(\exp(2 \pi i z)))$ be the exponential lift of the modified Blaschke product.

\begin{lemma}
The exponential lift of the modified Blaschke product $\tilde{Q}_\theta$ satisfies the following cross-ratio inequality with respect to the cross-ratio modulus $(\ref{qmodulus})$:
$$\prod_{i=i}^n { \chi(g(a_i  ), g(b_i  ), g(c_i  ), g(d_i ))  \over  \chi(a_i,b_i,c_i,d_i)} \le 8.$$
\end{lemma}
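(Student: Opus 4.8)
The plan is to recall \'Swi\c{a}tek's original argument (Section 2 of \cite{Sw}) establishing that the cross-ratio modulus $(\ref{qmodulus})$ satisfies a cross-ratio inequality with a definite bound $Q$, and then to inspect the single step in that argument where the value of $Q$ enters, in order to pin down the constant $8$ in our setting. The general structure is the same as in \cite{Sw}: since the modified Blaschke product $\tilde{Q}_\theta$ is holomorphic outside $\overline{\field{D}}$ and equals a M\"obius-conjugated rigid rotation inside $\field{D}$, its exponential lift $g$ is analytic on an open set containing $\field{R}$, except for the critical point. The key point is that for a finite collection of quadruples in an allowable configuration, the ring domains associated with the lifted quadrilaterals (as in the $Q'$ and $R(Q')$ construction of Section~2) can be compared before and after applying $g$; the product telescopes along the (allowable, hence disjoint mod $1$) intervals, and conformality of $g$ on the relevant annular regions means each factor is controlled. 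The reason the bound becomes \emph{absolute} here is that the only non-holomorphicity of $g$ lives over the unit disk, and the rotation number enters the picture only through the choice of the quasisymmetric extension $H$, which does not affect the cross-ratio estimate on the real line; the critical point contributes a fixed, degree-$3$ branching factor.

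Concretely, I would proceed as follows. First, reduce to the case $n=1$: it suffices to bound $\chi(g(a),g(b),g(c),g(d)) / \chi(a,b,c,d)$ by a constant on a \emph{single} quadruple, uniformly in the quadruple, because for an allowable configuration the intervals $(a_i,d_i)$ are disjoint mod $1$, so one may interpolate arcs $\gamma_i$ in the associated ring domains that are pairwise disjoint, and superadditivity/subadditivity of the modulus of the union of disjoint ring domains gives the product bound from the individual ones. (This is exactly how Lemma~\ref{1.1} is used in \cite{Sw}, and the absolute bound propagates.) Second, on a single quadruple, distinguish two cases. If the four points lie in a region where $g$ is holomorphic (i.e.\ the associated ring domain $R(Q')$ can be chosen to avoid the preimages of $\field{D}$ and the critical point), then $g$ is conformal there and $\chi$ is preserved exactly, contributing a factor $1$. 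If instead the configuration surrounds or abuts the critical point $1$ (equivalently, $0 \bmod 1$ for the lift), then one uses that near a cubic critical point $g$ is, up to conformal changes of coordinate, $w \mapsto w^3$, and the modulus of a ring domain separating the relevant point-pairs changes by at most a bounded multiplicative factor under a degree-$3$ covering; combined with the explicit Teichm\"uller bound (the Teichm\"uller's Modulus Theorem quoted above) and the lower bound $(\ref{chi_const})$ on $\chi$ for cross-ratios $\ge 1/4$, one gets a numerical constant. Third, assemble the constants: tracking the degree-$3$ factor at the critical point plus the allowance for the boundary behaviour of $g$ on $\field{T}$ (where the two definitions of $\bar{Q}_\theta$ match) yields the clean value $Q=8$.

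The main obstacle I anticipate is the second case: controlling the distortion of the cross-ratio modulus by the map $g$ in a neighbourhood of the critical point and along the unit circle, where $g$ is only quasiconformal (not holomorphic) on the inside. One must be careful that the ring domains built from the quadrilaterals can be taken to lie in the holomorphic locus $\hat{\field{C}}\setminus\overline{\field{D}}$ except possibly at the critical point itself; this requires the geometric input from \cite{Sw} that the admissible arcs $\gamma_i$ can be pushed into the upper half-plane (Lemma~\ref{2.1}) without leaving the region of holomorphy, and that pulling back under $g$ does not increase dilatation (the same observation used in Section~\ref{blaschke} to build $\sigma_\theta$). Once that is in place, the degree-$3$ local model at $1$ gives a fixed factor, and the remaining bookkeeping is the routine combination of $(\ref{chi_const})$, Lemma~\ref{2.1}, the Teichm\"uller bound, and Lemma~\ref{1.1}, which I would not carry out in detail here. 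The upshot is that the bound $Q$ can be taken independent of $\theta$ among quadratic irrationals, and equal to $8$; this is the value that feeds into $(\ref{eq:Ks})$ via the factor $8^{-7}$ in $K_1$.
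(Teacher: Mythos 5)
Your outline captures the correct high-level structure, which is indeed the same as the paper's: the lift $g$ is conformal away from the integer critical points, so all non-critical quadruples contribute a factor $1$; in an allowable configuration at most one quadruple can straddle a critical point, so the product reduces to a single critical factor; and that factor is controlled by comparing the quadrilateral modulus with the modulus of an associated ring domain and tracking how the ring-domain modulus changes under $g^{-1}$ near the branch point. That much agrees with what the paper does.

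However, the part that actually produces the number $8$ — the whole content of the Lemma — is missing. You assert that a ``degree-$3$ branching factor'' plus some unspecified ``allowance for the boundary behaviour of $g$ on $\field{T}$'' gives $Q=8$, but you never write down the estimate. The paper's argument is quite specific: Lemma~\ref{2.1} (applied with $k > 2|a_{i^*}-d_{i^*}|$) gives $\mod Q' \le 2\mod Q(g(a_{i^*}),\dots)$, the identity $\mod R(Q') = 1/(2\mod Q')$ then yields $\chi(g(a_{i^*}),\dots) \le 4\mod R(Q')$; a case analysis on where the critical point sits among $(a_{i^*},b_{i^*},c_{i^*},d_{i^*})$ gives $2d\,\mod R' \ge \mod R(Q')$ for $R'=g^{-1}(R(Q'))$; and $\chi(a_{i^*},\dots) = 2\mod R(Q(a_{i^*},\dots)) \ge 2\mod R'$. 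Assembling these gives the bound $4d$ on the critical factor, and the paper takes $d=2$ to arrive at $8$. With the $d=3$ you use (the naive local degree of a cubic critical point), the same chain gives $12$, not $8$. So your ``degree-$3$'' claim is not merely unproved — it is inconsistent with the constant you are trying to reach, and you would need to either justify why the effective branching degree for the modulus comparison is $2$ (as the paper asserts) or accept a weaker constant. This is the genuine gap.

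A secondary issue: you invoke Teichm\"uller's Modulus Theorem, the lower bound $(\ref{chi_const})$, and Lemma~\ref{1.1} (intersection number $k$, bound $Q^{2k}$). None of these appear in the proof of this Lemma; they are used later, in Lemma~\ref{1.2}, where one already has the cross-ratio inequality in hand and wants a commensurability constant. Conflating the two makes the sketch read as if the present Lemma depends on facts that are actually downstream of it. The concern you raise about whether the ring domains can be pushed into the half-plane of holomorphy is legitimate and worth a sentence, but it does not substitute for the actual modulus bookkeeping, which is what is being asked for.
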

\begin{proof}
  The lift $g$ is holomorphic in the upper half-plane, and conformal in all quadrilaterals that do not contain the critical points $\ldots,-3,-2,-1,0,1,2,3,\ldots$. 

Suppose that $(a_i,b_i,c_i,d_i)$ is some allowable configuration of quadruples of points (in particular, at most one interval $(a_{i^*} , d_{i^*} )$ contains a critical point). We first consider the contribution to the cross-ratio inequality of all non-critical intervals. The map $g$ is conformal on the corresponding quadrilaterals, and we get immediately
\begin{equation}\label{contrib1}
 \prod_{{\rm non-critical \ intervals}} { \chi(g(a_i  ), g(b_i  ), g(c_i  ), g(d_i ))  \over  \chi(a_i,b_i,c_i,d_i)}=1.
\end{equation}

Now, consider points $(a_{i^*} , b_{i^*} , c_{i^*} , d_{i^*} )$ in the critical interval,  choose $k >2 |g(a_{i^*}) -g(d_{i^*})|$ and a quadrilateral $Q' \subset \{z \in \field{C}: |\Im z| < k \} \cap Q($$g(a_{i^*}  ),$ $g(b_{i^*}  ),$ $g(c_{i^*}  ), g(d_{i^*} ))$. Its ring domain is contained in $\{z \in \field{C}: |\Im z| < k \} $, and
\begin{equation}\label{quarter} \mod  R(Q')  \ge  {1- {|g(a_{i^*})-g(d_{i^*})| \over k}  \over  2  \mod Q(g(a_{i^*}  ), g(b_{i^*}  ), g(c_{i^*}  ), g(d_{i^*} ))  } \ge  {1 \over 4} \chi(g(a_{i^*}  ), g(b_{i^*}  ), g(c_{i^*}  ), g(d_{i^*} )).
 \end{equation}

Consider $R'=g^{-1}(R(Q'))$. If the critical point $c$ lies in $(b_{i^*}, c_{i^*}  )$, then 
$$ \mod  R'={1 \over d} \mod  R(Q'),$$  
 where $d$ is the degree of branching of $g$ at the critical point. For the modified Blaschke product $d=2$. If the critical point $c$ lies in $(a_{i^*}, b_{i^*}  )$ or $(c_{i^*}, d_{i^*} )$, then one can find a curve $\gamma \ni g(c)$ that divides the ring domain $R(Q')$ in two ring domains $R_{outer}$ and $R_{inner}$. If $\mod   R_{outer} > \mod   R_{inner}$, set $R'=g^{-1}(R_{outer})$, then  
$$ \mod  R' \ge {1 \over 2 d} \mod  R(Q').$$  
If $\mod  R_{inner} > \mod  R_{outer}$, set $R'=g^{-1}(R_{inner})$, then 
$$ \mod  R' \ge {1 \over 2} \mod  R(Q').$$  
Therefore, in either case,
\begin{equation}
 2 d \mod{R'} \ge  \mod{R(Q')}.
 \end{equation}
Finally, 
$$ \chi(a_{i^*},b_{i^*},c_{i^*},d_{i^*})=2 \mod{R(Q(a_{i^*},b_{i^*},c_{i^*},d_{i^*}))} \ge 2 \mod R',$$
which, we use, together with  $\eqref{quarter}$, in the estimate for the critical interval:
\begin{equation}\label{contrib2}
{ \chi(g(a_{i^*}  ), g(b_{i^*}  ), g(c_{i^*} ), g(d_{i^*} ))  \over  \chi(a_{i^*} , b_{i^*} , c_{i^*} , d_{i^*})} \le { 4 \mod R(Q') \over  2 \mod R'}  \le 4 d. 
\end{equation}

We combine $(\ref{contrib1})$ with $(\ref{contrib2})$ to obtain the bound in the claim of the Lemma.
\end{proof}
\qed

\end{document}